\newcommand\cyr
\renewcommand\rmdefault{wncyr}
\renewcommand\sfdefault{wncyss}
\renewcommand\encodingdefault{OT2}
\DeclareTextFontCommand{\textcyr}{\cyr}
\newtheorem{theorem}{Theorem}
\newtheorem{proposition}[theorem]{Proposition}
\newtheorem{lemma}[theorem]{Lemma}
\newtheorem{definition}[theorem]{Definition}
\newtheorem{remark}[theorem]{Remark}
\newtheorem{hypothesis}[theorem]{Hypothesis}
\chardef\bslash=`\\ 
\newcommand{\fA}{\mathfrak{A}}
\newcommand{\wh}{\widehat}
\newcommand{\dA}{{\dot A}}
\newcommand{\bbR}{{\mathbb{R}}}
\newcommand{\bbC}{{\mathbb{C}}}
\newcommand{\linspan}{\mathrm{lin\ span}}
\newcommand{\ti}{\tilde  }
\newcommand{\dom}{\text{\rm{Dom}}}
\newcommand{\calD}{{\mathcal D}}
\newcommand{\calH}{{\mathcal H}}
\newcommand{\calR}{{\mathcal R}}
\newcommand{\calS}{{\mathcal S}}
\newcommand{\mM}{\mathfrak M}
\newcommand{\whA}{T}
\newcommand{\whB}{T_{\cB}^\kappa}
\renewcommand{\Im}{\text{\rm Im}}
\def\sG{{\mathfrak G}}      
\def\sM{{\mathfrak M}}   \def\sN{{\mathfrak N}}   
\def\sS{{\mathfrak S}}
\def\bA{{\mathbb A}}      \def\dC{{\mathbb C}}
   \def\cB{{\mathcal B}}   
\def\cD{{\mathcal D}}      
   \def\cH{{\mathcal H}}
      \def\cR{{\mathcal R}}
\def\cS{{\mathcal S}}
\def\RE{{\rm Re\,}}
\def\Ker{{\rm Ker\,}}
\def\wh{\hat}
\def\uphar{{\upharpoonright\,}}
\DeclareMathOperator{\IM}{Im}
\newcommand{\eval}[2][\right]{\relax
  \ifx#1\right\relax \left.\fi#2#1\rvert}
\begin{document}

\title{The L-system representation and c-entropy}

\author{S. Belyi}
\address{Department of Mathematics\\ Troy University\\
Troy, AL 36082, USA\\
}
\curraddr{}
\email{sbelyi@troy.edu}


\author[Makarov]{K. A. Makarov}
\address{Department of Mathematics\\
 University of Missouri\\
  Columbia, MO 63211, USA}
\email{makarovk@missouri.edu}


\author{E. Tsekanovskii}
\address{Department of Mathematics\\ Niagara University, Lewiston, NY
14109\\ USA}
\email{tsekanov@niagara.edu}


\subjclass{Primary 47A10; Secondary 47N50, 81Q10}
\date{DD/MM/2004}

\keywords{L-system, transfer function, impedance function,  Herglotz-Nevan\-linna function, Donoghue class, c-entropy, dissipation coefficient.
\\
  The second author was partially  supported by the Simons collaboration grant 00061759 while preparing this article.}

\begin{abstract}
Given a symmetric operator $\dA$ with deficiency indices $(1,1)$ and its self-adjoint extension $A$ in a Hilbert space $\calH$, we construct a (unique) L-system with the main operator in $\calH$ such that   its  impedance mapping  coincides with the Weyl-Titchmarsh function $M_{(\dot A, A)}(z)$  or  its linear-fractional transformation $M_{(\dot A, A_\alpha)}(z)$. Similar L-system constructions are provided for the  Weyl-Titchmarsh function  $aM_{(\dot A, A)}(z)$ with $a>0$. We also evaluate  $c$-entropy and the main operator dissipation coefficient for the obtained  L-systems. 
 \end{abstract}

\maketitle

\tableofcontents


\section{Introduction}\label{s1}

In this paper we continue our study of the connections between various subclasses of Herglotz-Nevanlinna functions and their conservative realizations as  the impedance functions of L-systems (see \cite{ABT,BMkT,BMkT-2,BMkT-3,Lv2}).

Recall the concept of an L-system.

Let $T$ be a non-symmetric, densely defined linear operator in a Hilbert space $\cH$ such that its resolvent set $\rho(T)$ is not empty. We also assume that
$\dom(T)\cap \dom(T^*)$ is dense and that the restriction $T|_{\dom(T)\cap \dom(T^*)}$ is a closed symmetric operator $\dA$ with deficiency indices $(1,1)$. Let $\calH_+\subset\calH\subset\calH_-$ be the rigged Hilbert space associated with $\dot A$ (see the next section for details).

By   an  \textit{L-system}  we mean the array
\begin{equation}
\label{col0}
 \Theta =
\left(%
\begin{array}{ccc}
  \bA    & K & 1 \\
   \calH_+\subset\calH\subset\calH_- &  & \dC \\
\end{array}%
\right),
\end{equation}
where the \textit{state-space operator} $\bA$ is a bounded linear operator from
$\calH_+$ into $\calH_-$ such that  $\dA \subset T\subset \bA$, $\dA \subset T^* \subset \bA^*$,
$K$ is a bounded linear operator from $\dC$ into $\calH_-$  such that $\IM\bA=KK^*$.

The operator-valued function
\begin{equation*}\label{W1}
 W_\Theta(z)=I-2iK^*(\bA-zI)^{-1}K,\quad z\in \rho(T),
\end{equation*}
 is called the \textit{transfer function}  of the  L-system $\Theta$ and
\begin{equation*}\label{real2}
 V_\Theta(z)=i[W_\Theta(z)+I]^{-1}[W_\Theta(z)-I] =K^*(\RE\bA-zI)^{-1}K,\quad z\in\rho(T)\cap\dC_{\pm},
\end{equation*}
is called the \textit{impedance function } of $\Theta$ (see  Section \ref{s2} for more details).

The main goal of this note can be described as follows.  Given  a symmetric operator $\dA$ with deficiency indices $(1,1)$ and  its arbitrary (but fixed) self-adjoint extension $A$ in a Hilbert space $\calH$,  let $M_{(\dot A, A)}(z)$ denote the   Weyl-Titchmarsh function associated with the pair $(\dot A, A)$. We  construct a unique L-system $\Theta$ of the  form \eqref{col0},  with   $\calH_+\subset\calH\subset\calH_-$   generated by $\dot A$, such that  the  impedance function $V_\Theta(z)$ coincides with $M_{(\dot A, A)}(z)$. Such an L-system we  call the ``\textit{L-system representation}" of the function $M_{(\dot A, A)}(z)$.
 A similar construction is available  for representing the function $M_{(\dot A, A_\alpha)}(z)$ associates with the pair $(\dot A, A_\alpha)$ where $A_\alpha$ is an arbitrary self-adjoint extensions of the symmetric operator  $\dot A$.
 We also  obtain several results that provide us with an explicit construction of an   L-system such that its impedance mapping coincides with  $aM_{(\dA, A)}(z)$ for an arbitrary {value of} the parameter $a>0$. For such  L-systems we  establish an explicit connection between the c-entropy and the coefficient of dissipation of the main operator of the system. In particular, we show that the differential of L-system c-entropy, considered as a one-form  on the Riemann surface $\mathfrak{R}=\bbC\cup\{\infty\}$ of the parameter $a$, is an Abelian differential of the third kind with simple poles at $a=\pm 1$.

 Notice that the main difference in the suggested  approach as compared to the standard techniques used in the realization  theory  for  Herglotz-Nevanlinna functions is that  the underlying triple $\cH_+\subset \cH\subset \cH_-$ in \eqref{col0} is generated by  the symmetric operator $\dot A$ in question.   In order to make this distinction clearer, we use the term ``\textbf{L-system representation}"  instead of the customary ``L-system realization"  when referring  to the corresponding   L-systems.

The paper is organized as follows.

Section \ref{s2} contains necessary information on the L-systems theory.

In Section \ref{s3} we give the formal definition and describe basic properties of the Weyl-Titchmarsh functions   $M_{(\dot A, A)}(z)$ and $M_{(\dot A, A_\alpha)}(z)$.

Section \ref{s4} provides us with an explicit construction of  an L-system associated with   a given  densely defined closed symmetric operator $\dA$ with  deficiency indices $(1,1)$ and fixed $(+)$-normalized deficiency vectors $g_+$ and $g_-$.

Section \ref{s5} contains the main results of the paper, \see Theorems \ref{t-6}  and \ref{t-8}. In particular, we present explicit 
constructions of  L-systems  $\Theta_{10}$ and $\Theta_{\alpha0}$ (see \eqref{e-63-1-0} and \eqref{e-63-alpha-0}) whose impedance functions match $M_{(\dot A, A)}(z)$ and $M_{(\dot A, A_\alpha)}(z)$, respectively. 
 Moreover, we show that the L-systems  $\Theta_{10}$ and $\Theta_{\alpha0}$ have the same main dissipative operator whose von Neumann's parameter with respect to the given  deficiency vectors $g_+$ and $g_-$ (see \eqref{parpar}) is zero.
The obtained results are generalized to a more general setting  where the Weyl-Titchmarsh functions are replaced by the ``re-normalized'' functions $aM_{(\dA,A)}(z)$ and $aM_{(\dA,A_\alpha)}(z)$,  with $a>0$, from  the generalized Donoghue classes discussed  in Section \ref{s3}.

 In Section \ref{s6} we recall the concept of  c-entropy of an L-system introduced  in \cite{BT-16}) and relate  the entropy  to the  dissipation coefficient of  the  main operator of the systems   discussed in  Section \ref{s5}, see Theorems \ref{t-12}--\ref{t-15}.
In particular, we  show that
the c-entropy of the systems associated with the Donoghue functions $aM_{(\dA,A)}(z)$ and $aM_{(\dA,A_\alpha)}(z)$ is infinite if and only if $a=1$.

We  conclude the paper with providing  examples that illustrate   main results.

\section{Preliminaries}\label{s2}

For a pair of Hilbert spaces $\calH_1$ and  $\calH_2$,  denote by
$[\calH_1,\calH_2]$ the set of all bounded linear operators from
$\calH_1$ to $\calH_2$. Let $\dA$ be a closed, densely defined,
symmetric operator in a Hilbert space $\calH$ with inner product
$(f,g)$, $f,g\in\calH$. Throughout this paper, by a  \textit{quasi-self-adjoint extension} of $\dA$ we mean any non-symmetric operator $T$ in $\cH$ such that
\[
\dA\subset T\subset\dA^*.
\]
 Consider the rigged Hilbert space $\calH_+\subset\calH\subset\calH_- $
(see \cite{ABT,Ber,Ber63}), where $$\calH_+ =\dom(\dA^*)$$ and
\begin{equation}\label{108}
(f,g)_+ =(f,g)+(\dA^* f, \dA^*g),\;\;f,g \in \dom(A^*).
\end{equation}

Let $\calR$ be the \textit{\textrm{Riesz-Berezansky   operator}} $\calR$ (see  \cite[Section 2.1]{ABT}, \cite{Ber}, \cite{Ber63}) which maps $\mathcal H_-$ onto $\mathcal H_+$ such
 that   $$(f,g)=(f,\calR g)_+\quad \forall f\in\calH_+, g\in\calH_-$$ with
 $\|\calR g\|_+=\| g\|_-$, so that
\begin{equation}\label{e3-4}
\aligned (f,g)_-=(f,\calR g)=(\calR f,g)=(\calR f,\calR g)_+,\qquad
(f,g\in \mathcal H_-),\\
(u,v)_+=(u,\calR^{-1} v)=(\calR^{-1} u,v)=(\calR^{-1} u,\calR^{-1}
v)_-,\qquad (u,v\in \mathcal H_+).
\endaligned
\end{equation}

 Notice that upon
identifying the space conjugate to $\calH_\pm$ with $\calH_\mp$, we
get that  $\bA\in[\calH_+,\calH_-]$ implies
$\bA^*\in[\calH_+,\calH_-].$

Recall that  {\cite{ABT}}
an operator $\bA\in[\calH_+,\calH_-]$ is called a \textit{self-adjoint
bi-extension} of a symmetric operator $\dot A$ if  $\dA
\subset\bA$ and $\bA^*=\bA$.

Next, if  $\bA$ is  a self-adjoint
bi-extension of $\dA$, the restriction $   \hat A=\bA\uphar\dom(\hat A) $ of  $\bA$
on
$$
\dom(\hat A)=\{f\in\cH_+:\bA f\in\cH\}
$$
 is called a \textit{quasi-kernel} of a self-adjoint bi-extension $\bA$ (see \cite[Section 2.1]{ABT}, \cite{TSh1}).

 A self-adjoint bi-extension $\bA$ of a symmetric operator $\dA$ is called \textit{t-self-adjoint} (see \cite[Definition 4.3.1]{ABT}) if its quasi-kernel $\hat A$ is a self-adjoint operator in $\calH$.
An operator $\bA\in[\calH_+,\calH_-]$  is called a \textit{quasi-self-adjoint bi-extension} of an operator $T$ if $\bA\supset T\supset \dA$ and $\bA^*\supset T^*\supset\dA.$

Also recall that   in
accordance with  the von Neumann Theorem (see \cite[Theorem 1.3.1]{ABT}),
 the domain of an arbitrary self-adjoint extension $\wh A$ of $\dA$  can be represented  as
\begin{equation}\label{DOMHAT}
\dom(\hat A)=\dom(\dA)\oplus(I+U)\sN_{i}.
\end{equation}
Here the operator $U$,  von Neumann's parameter,   is both a $(\cdot)$-isometric as well as $(+)$-isometric operator from the deficiency subspace  $\sN_i$ of $\dA$  into
the deficiency subspace
$\sN_{-i}$
where  $$\sN_{\pm i}=\Ker (\dA^*\mp i I).$$

Throughout this paper, we will be mostly interested in the following type of quasi-self-adjoint bi-extensions.
Let $T$ be a quasi-self-adjoint extension of $\dA$ with nonempty resolvent set $\rho(T)$. A quasi-self-adjoint bi-extension $\bA$ of an operator $T$ is called (see \cite[Definition 3.3.5]{ABT}) a ($*$)\textit{-extension } of $T$ if $\RE\bA$ is a
t-self-adjoint bi-extension of $\dA$.

In what follows we assume that $\dA$ has deficiency indices $(1,1)$. In this case it is known \cite{ABT} that every  quasi-self-adjoint extension $T$ of $\dA$  admits $(*)$-extensions,
and the  description of all $(*)$-extensions via the Riesz-Berezansky   operator $\calR$ can be found in \cite[Section 4.3]{ABT}.

\begin{definition}{[cf. \cite[Definition 6.3.4]{ABT}]}
 An array
\begin{equation}\label{e6-3-2}
\Theta= \begin{pmatrix} \bA&K&\ 1\cr \calH_+ \subset \calH \subset
\calH_-& &\dC\cr \end{pmatrix}
\end{equation}
 is called an \textbf{{L-system}}   if:
\begin{enumerate}
\item[(1)] {$T$ is a dissipative ($\Im(Tf,f)\ge0$, $f\in\dom(T)$) quasi-self-adjoint extension of a symmetric operator $\dA$ with deficiency indices $(1,1)$};
\item[(2)] {$\mathbb  A$ is a   ($\ast $)-extension of  $T$};
\item[(3)] $\IM\bA= KK^*$, where $K\in [\dC,\calH_-]$ and $K^*\in [\calH_+,\dC]$.
\end{enumerate}
\end{definition}
 The operators $T$ and $\bA$ are called the \textit{main and state-space operators respectively} of the system $\Theta$, and $K$ is  the \textit{channel operator}.
It is easy to see that the operator $\bA$ of the system  \eqref{e6-3-2}  can be chosen in such a way  that $$\IM\bA=(\cdot,\chi)\chi,\quad \chi\in\calH_-$$ and
$$K c=c\cdot\chi\quad c\in\dC.$$
 {
Throughout this paper we will often rely on another important element of an L-system, the quasi-kernel $\hat A$ of the $\RE\bA$,  {which is the  self-adjoint extension $\hat A$ of $\dA$  defined as}
\begin{equation}\label{e-5-qk}
    \hat A=\RE\bA\uphar\dom(\hat A),\quad \dom(\hat A)=\{f\in\cH_+:\RE\bA f\in\cH\}.
\end{equation}
}
  The system $\Theta$ in \eqref{e6-3-2} is called \textit{minimal} if the operator $\dA$ is a prime operator in $\calH$, i.e., there exists no non-trivial reducing invariant subspace of $\calH$ on which it induces a self-adjoint operator. Notice that  minimal L-systems of the form \eqref{e6-3-2} with    one-dimensional input-output space were also discussed in \cite{BMkT}.

 With any L-system $\Theta$,  we  associate the  \textbf{transfer  } and  \textbf{impedance function}  given by the formulas
\begin{equation}\label{e6-3-3}
W_\Theta (z)=I-2iK^\ast (\mathbb  A-zI)^{-1}K,\quad z\in \rho (T),
\end{equation}
and
\begin{equation}\label{e6-3-5}
V_\Theta (z) = K^\ast (\RE\bA - zI)^{-1} K, \quad z\in \rho (T),
\end{equation}
respectively.

Recall that  for $\IM z\ne0$, $z\in\rho(T)$ we have
\begin{equation}\label{e6-3-6}
\begin{aligned}
V_\Theta (z) &= i [W_\Theta (z) + I]^{-1} [W_\Theta (z) - I],\\
W_\Theta(z)&=(I+iV_\Theta(z))^{-1}(I-iV_\Theta(z)).
\end{aligned}
\end{equation}
Moreover,
any impedance function $V_\Theta(z)$ admits the  integral representation
\begin{equation}\label{e-60-nu}
V_\Theta(z)=Q+\int_\bbR \left(\frac{1}{\lambda-z}-\frac{\lambda}{1+\lambda^2}\right)d\sigma,
\end{equation}
where $Q$ is a real number and $\sigma$ is an  infinite Borel measure   such that
$$
\int_\bbR\frac{d\sigma(\lambda)}{1+\lambda^2}<\infty.
$$

We refer to  \cite{ABT,BMkT,GT} and references therein for the details and also for the the complete description of the  class of all Herglotz-Nevanlinna functions that can be realized as impedance functions of an L-system.

\section{Donoghue classes and L-systems}\label{s3}

Suppose that $\dA$ is a closed prime 
densely defined symmetric operator with deficiency indices $(1,1)$ in a Hilbert space $\calH$. Assume also that $T \ne T^*$ is a  maximal dissipative extension of $\dot A$,
$$\Im(T f,f)\ge 0, \quad f\in \dom(T ).$$
Since $\dot A$ is symmetric, its dissipative extension $T$ is automatically quasi-self-adjoint \cite{ABT},
that  is,
$$
\dot A \subset T \subset \dA^*,
$$
and hence, (see \cite{BMkT})
\begin{equation}\label{parpar}
g_+-\kappa g_-\in \dom(T)\quad \text{for some } |\kappa|<1,
\end{equation}
where $g_\pm\in\sN_{\pm i}=\Ker (\dA^*\mp i I)$ and  $\|g_\pm\|=1$.
{Throughout this paper  $\kappa$ will be referred to as the \textbf{ von Neumann  parameter} of the operator $T$ with respect to the basis $\{g_\pm\}$ in the subspace $\sN_i\dot +\sN_{-i}.$

Recall that in this setting 
the  Weyl-Titchmarsh  function $M_{(\dot A, A)}(z)$ associated with the pair $(\dot A, A)$ is given by
\begin{equation}\label{e-DWT}
M_{(\dot A, A)}(z)=((Az+I)(A-zI)^{-1}g_+,g_+), \quad z\in \bbC_+.
\end{equation}
Notice that the function $M_{(\dot A, A)}(z)$ is well-defined whenever the normalization condition $\|g_+\|=1$ holds.

Along with the  Weyl-Titchmarsh  function $M_{(\dot A, A)}(z)$, introduce the    {\it  Liv\v{s}ic function} (see \cite{MT-S}, \cite{MTBook})
\begin{equation}\label{charsum}
s(z)=s_{(\dot A, A)}(z)=\frac{z-i}{z+i}\cdot \frac{(g_z, g_-)}{(g_z, g_+)}, \quad
z\in \bbC_+,
\end{equation}
provided that
\begin{equation}\label{unas}
g_+-g_-\in \dom (A).
\end{equation}
Here
$$g_z\in \Ker( \dA^*-zI), \quad z\in \bbC_+.
$$

 Given a  triple  $\fA=(\dot A,  T,A)$,  we call $\kappa$ the {\it von Neumann parameter} of the triple  $\fA=(\dot A,  T,A)$ provided that
the deficiency elements $g_\pm$, $\|g_\pm\|=1$ are chosen in such a way that \eqref{unas} and \eqref{parpar} hold.

In this case, the Weyl-Titchmarsh  $M(z)=M_{(\dot A, A)}(z)$ and  Liv\v{s}ic $s(z)=s_{(\dot A, A)}(z)$ functions  (see \cite {MT-S,MTBook}) are related as
 \begin{equation}\label{blog}
s(z)=\frac{M(z)-i}{M(z)+i},\quad z\in \bbC_+,
\end{equation}
or, equivalently,
\begin{equation}\label{e-12-M-s}
    M(z)=\frac1i\cdot\frac{s(z)+1}{s(z)-1},\quad z\in \bbC_+.
\end{equation}
 We stress that the Weyl-Titchmarsh  $M(z)=M_{(\dot A, A)}(z)$ is rather sensitive to the choice of a self-adjoint reference extension $A$ of the symmetric operator $\dot A$. For instance (see, e.g., \cite[Appendix E, Lemma E.1]{MTBook}),
 if
\begin{equation}\label{dom1}
g_+-e^{2i\alpha}g_-\in \dom (A_\alpha),\quad \alpha \in [0,\pi),
\end{equation}
then the corresponding transformation law for the Weyl-Titchmarsh functions reads as
\begin{equation}\label{transm}
M(\dot A, A_\alpha)=\frac{\cos \alpha \, M(\dot A, A)-\sin \alpha}{
\cos\alpha +\sin \alpha \,M(\dot A, A)},
\end{equation}
while the transformation law for the  corresponding Liv\v{s}ic function is simply reduced to the appearance of an additional ``innocent'' phase factor
$$
s(\dA,A_\alpha)=e^{2i\alpha}s(\dA,A).
$$


Let $\sN$ (see \cite{BMkT-3}) be the class of all Herglotz-Nevanlinna functions $M(z)$ that admit the representation
\begin{equation}\label{hernev-0}
M(z)=\int_\bbR \left(\frac{1}{\lambda-z}-\frac{\lambda}{1+\lambda^2}\right)d\sigma,
\end{equation}
where $\sigma$ is an  infinite Borel measure with
$$
\int_\bbR\frac{d\sigma(\lambda)}{1+\lambda^2}=a<\infty.
$$

 Following our earlier developments in \cite{BMkT,BMkT-3,MT10,MT2021},
 we split   $\sM$ in to three subclasses
 $$
 \sN=\sM_\kappa\cup \sM_0\cup \sM_\kappa^{-1}
 $$
in accordance with whether the norming constant $a$ has the property that
 $a<1$, $a=1$ or $a>1$,
 with
\begin{equation}\label{e-19-kappa}
 \kappa=\begin{cases}\frac{1-a}{1+a}, &a<1\\
 0,&a=1\\
 \frac{a-1}{1+a}, &a>1
 \end{cases}.
\end{equation}
 In particular, $\sM_0=\sM_0^{-1}=\sM$  is the \textbf{Donoghue class} of all analytic mappings $M(z)$ from $\bbC_+$ into itself  that admits the representation
 \eqref{hernev-0}    and has a property
$$
\int_\bbR\frac{d\sigma(\lambda)}{1+\lambda^2}=1\,,\quad\text{equivalently,}\quad M(i)=i.
$$

It is well known  \cite{D,GMT97,GT,MT-S} that $M(z)\in \mM$ if and only if $M(z)$ can be realized  as the Weyl-Titchmarsh function $M_{(\dot A, A)}(z)$ associated with the pair $(\dot A, A)$. Following \cite{BMkT,BMkT-2} we refer to the classes $\sM_\kappa$ and $\sM_\kappa^{-1}$ as the \textbf{generalized Donoghue classes.}

 {We say that an  L-system $\Theta$ of the form \eqref{e6-3-2} is \textit{associated with the triple} $(\dot A, \whA, A)$ if $\dA$ and $T$ are the symmetric and main operators of $\Theta$, respectively, and $A=\hat A$, where $\hat A$ is the quasi-kernel of $\RE\bA$.}
\begin{proposition}[\cite{BMkT, BMkT-2}]\label{prop2}
Suppose that $\whA \ne\whA^*$  is  a maximal dissipative extension of  a symmetric operator $\dot A$  with deficiency indices $(1,1)$ in a Hilbert space $\calH$.  Given $k$, $k\in[0,1)$, assume that the deficiency elements $g_\pm\in \Ker (\dA^*\mp iI)$ are normalized, $\|g_\pm\|=1$, and chosen in such a way that
\begin{equation}\label{ddoomm14}
g_+-\kappa g_-\in \dom (\whA ).
\end{equation}
Suppose that $A$ is a self-adjoint extension of $\dot A$. Assume, in addition, that the Weyl-Titchmarsh function associated with the pair $(\dot A, A)$ is not an identical constant in $\dC_+$.

Denote by  $\Theta$  the minimal L-system of the form \eqref{e6-3-2} associated with the triple $(\dot A, \whA, A)$. Then:

If
\begin{equation}\label{e-21-Hyp}
g_+-g_-\in \dom (A),
\end{equation}
then  the impedance function  $V_{\Theta}$  of the system belongs to the class $\sM_\kappa$.

If, instead,
\begin{equation}\label{e-22-AntHyp}
g_++g_-\in \dom (A),
\end{equation}
then   $V_{\Theta}$ belongs to the class $\sM_{\kappa}^{-1}$.
\end{proposition}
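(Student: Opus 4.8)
The plan is to reduce the class membership of $V_\Theta$ to the value of a single norming constant, and to obtain that constant by exhibiting $V_\Theta$ as an explicit positive scalar multiple of the Weyl-Titchmarsh function $M=M_{(\dot A, A)}$. Recall that every impedance function satisfies the representation \eqref{hernev-0} up to a real constant $Q$, and evaluating at $z=i$ gives $V_\Theta(i)=Q+ia$ with $a=\int_\bbR(1+\lambda^2)^{-1}d\sigma$; thus, once $Q=0$ is known, $a$ is read off as $V_\Theta(i)/i$ and the trichotomy \eqref{e-19-kappa} decides between $\sM_\kappa$ and $\sM_\kappa^{-1}$. Under the normalization $\|g_+\|=1$ the function $M$ is a genuine (non-constant, by hypothesis) Donoghue function with $Q=0$ and $M(i)=i$; hence it will suffice to prove $V_\Theta=c\,M$ with $c>0$, for then automatically $Q=0$ and $a=c$.

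First I would invoke the explicit minimal L-system associated with the triple $(\dot A,\whA,A)$ produced by the construction of Section~\ref{s4}, which furnishes the $(\ast)$-extension $\bA$, its real part $\RE\bA$ (whose quasi-kernel is exactly $A$), and the channel vector $\chi\in\calH_-$ with $Kc=c\chi$ and $\IM\bA=(\dott,\chi)\chi$, all written through $g_\pm$, the parameter $\kappa$ of \eqref{parpar}, and the Riesz-Berezansky operator $\calR$. Feeding these data into \eqref{e6-3-3} one finds that the transfer function is the M\"obius image of the Liv\v{s}ic function $s=s_{(\dot A, A)}$ fixed by $\kappa$, namely $W_\Theta(z)=(\kappa s(z)-1)(s(z)-\kappa)^{-1}$.

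Take first the case \eqref{e-21-Hyp}, $g_+-g_-\in\dom(A)$, so that \eqref{unas} holds and $s$ is the Liv\v{s}ic function of the pair $(\dot A, A)$. Substituting the formula for $W_\Theta$ into \eqref{e6-3-6} and simplifying yields
\begin{equation*}
V_\Theta(z)=i\,\frac{W_\Theta(z)-1}{W_\Theta(z)+1}=-i\,\frac{1-\kappa}{1+\kappa}\cdot\frac{s(z)+1}{s(z)-1},
\end{equation*}
and since $(s+1)(s-1)^{-1}=iM$ by \eqref{e-12-M-s}, this collapses to $V_\Theta=\frac{1-\kappa}{1+\kappa}M$. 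Consequently $Q=0$, the representing measure of $V_\Theta$ equals $\frac{1-\kappa}{1+\kappa}$ times that of $M$, and $a=\frac{1-\kappa}{1+\kappa}\le1$; inverting \eqref{e-19-kappa} returns the parameter $\kappa$, so $V_\Theta\in\sM_\kappa$.

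For the case \eqref{e-22-AntHyp}, $g_++g_-\in\dom(A)$, I would reduce to the previous computation by relabelling the deficiency basis. Setting $g_-'=-g_-\in\sN_{-i}$, one has $g_+-g_-'=g_++g_-\in\dom(A)$, so $A$ now meets the condition \eqref{e-21-Hyp} relative to $\{g_+,g_-'\}$, while $g_+-\kappa g_-=g_+-(-\kappa)g_-'\in\dom(\whA)$ shows that $\whA$ has von Neumann parameter $-\kappa$ in the new basis. As $M$ depends only on $g_+$ via \eqref{e-DWT}, the previous paragraph applies verbatim with $\kappa$ replaced by $-\kappa$ and gives $V_\Theta=\frac{1+\kappa}{1-\kappa}M$, whence $a=\frac{1+\kappa}{1-\kappa}\ge1$ and $V_\Theta\in\sM_\kappa^{-1}$ by \eqref{e-19-kappa}. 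The non-constancy hypothesis on $M$ guarantees that $\sigma$ is a genuine infinite measure and, together with primeness of $\dot A$, that the minimal L-system is non-degenerate, legitimizing these steps. The hard part is the second paragraph: extracting the transfer-function identity $W_\Theta=(\kappa s-1)(s-\kappa)^{-1}$ from the explicit construction requires carrying $\calR$ and the von Neumann parametrization \eqref{DOMHAT},\eqref{parpar} through the computation of $\bA,\RE\bA,K$ while tracking the phase conventions so that $V_\Theta$ emerges Herglotz rather than anti-Herglotz; everything afterwards is the elementary M\"obius algebra above.
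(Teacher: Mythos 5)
You have the algebra right and the reduction is sound; the main point of comparison is that the paper itself gives no computation at all: its entire proof of Proposition \ref{prop2} is the remark that the statement follows by combining \cite[Theorem 12]{BMkT} with \cite[Theorem 5.4]{BMkT-2}. Your pivot identity $W_\Theta(z)=(\kappa s(z)-1)(s(z)-\kappa)^{-1}$ --- which you assert rather than derive, and correctly flag as the hard part --- is precisely the content of the first of those citations, so logically your argument rests on the same external input as the paper's; what you add is the explicit M\"obius algebra converting that identity into $V_\Theta=\frac{1-\kappa}{1+\kappa}\,M_{(\dot A,A)}$ under \eqref{e-21-Hyp}, and this part checks out (sanity checks: $s(i)=0$ gives $|W_\Theta(i)|=1/\kappa$, consistent with \eqref{e-80-entropy}, and the norming constant $a=\frac{1-\kappa}{1+\kappa}$ inverts \eqref{e-19-kappa}, in agreement with Theorems \ref{t-9} and \ref{t-10} read in the reverse direction). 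Your handling of the case \eqref{e-22-AntHyp} by the relabelling $g_-\mapsto-g_-$ --- so that the main operator acquires von Neumann parameter $-\kappa$ while $M_{(\dot A,A)}$, which by \eqref{e-DWT} depends only on $g_+$, is unchanged --- is a clean unification: the paper disposes of exactly this case by the separate citation of \cite[Theorem 5.4]{BMkT-2}. Two caveats you should make explicit. First, since nothing in your text discharges the transfer-function identity, your proof is not more self-contained than the paper's; it is the same proof at a finer granularity, with the citation moved one level down. Second, after the sign flip you apply that identity with von Neumann parameter $-\kappa\notin[0,1)$; this is legitimate because the relation in \cite{BMkT} is established for complex parameters of modulus less than one, but the version you quoted is normalized to $\kappa\in[0,1)$, so that extension must be invoked explicitly for the second case to be complete.
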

The proof of  Proposition \ref{prop2} can be obtained by combining  \cite[Theorem 12]{BMkT} and \cite[Theorem 5.4]{BMkT-2}.

For the further reference, recall that if the symmetric operator $\dot A $ is prime, then both  the Weyl-Titchmarsh function $M_{(\dot A, A)}(z)$ and  the Liv\v{s}ic function $s_{(\dot A, A)}(z)$ are  complete unitary invariants  of the  pair $(\dot A, A)$  (see  \cite{MT-S,MTBook}).  In particular, the pairs $(\dot A, A)$ and $ (\dot B, B)$ are mutually unitary equivalent if and only if $M_{(\dot A, A)}(z)=M_{(\dot B, B)}(z)$, equivalently, $s_{(\dot A, A)}(z)=s_{(\dot B, B)}(z)$.

\section{Inclusion of  {a dissipative operator} into an L-system}\label{s4}

In  this section, following   \cite{BMkT-3,T69},  we  provide an explicit construction of  an L-system based upon the following operator theoretic setting.

Assume that  $\dA$ is  a densely defined closed symmetric operator with finite deficiency indices $(1,1)$. Given $$
(\kappa,U)\in [0,1)\times \mathbb{T}, \quad \text{with} \quad \mathbb{T}=\{z\in \bbC\,\mid\, |z|=1\},
$$
and $(+)$-normalized deficiency elements $g_\pm\in\sN_{\pm i}=\Ker (\dA^*\mp i I)$,   $\|g_\pm\|_+=1$, assume that $T$ is a quasi-selfadjoint extension of $\dot A$ such that
$$
g_+-\kappa g_-\in \dom (T).
$$
Also assume that $A$ is a reference self-adjoint extension of $\dot A$ with
$$
g_++Ug_-\in \dom (A).
$$

Introduce the  L-system  (see \cite[Appendix A]{BMkT-3})
 \begin{equation}\label{e-215}
\Theta= \begin{pmatrix} \bA&K&\ 1\cr \calH_+ \subset \calH \subset
\calH_-& &\dC\cr \end{pmatrix},
\end{equation}
where $\bA$ is a unique $(*)$-extension $\bA$ of $T$ (see \cite[Theorem 4.4.6]{ABT}) and
$$K\,c=c\cdot\chi,\quad  (c\in\dC).$$

In this case, the state-space  operator $\bA$ given by
\begin{equation}\label{e-205-A}
\begin{aligned}
\bA&=\dA^*+\frac{\sqrt2 i(\kappa+\bar U)}{|1+\kappa U|\sqrt{1-\kappa^2}}\Big(\cdot\,\,, \kappa\varphi+\psi\Big)\chi,
   \end{aligned}
\end{equation}
with
\begin{equation}\label{e-212}
    \chi=\frac{\kappa^2+1+2\kappa U}{\sqrt2|1+\kappa U|\sqrt{1-\kappa^2}}\varphi+ \frac{\kappa^2 U+2\kappa+ U}{\sqrt2|1+\kappa U|\sqrt{1-\kappa^2}}\psi.
\end{equation}
Here
\begin{equation}\label{e-23-phi-psi}
\varphi=\calR^{-1}(g_+),\quad  \psi=\calR^{-1}(g_-),
 \end{equation}
with  $\calR$ the   Riesz-Berezansky   operator.

\begin{remark}\label{r-1}
Notice that since by the hypothesis
$
\|g_\pm\|_+=1,
$
we have
$$\|\varphi\|_-=\|\psi\|_-=1.$$
Indeed, by  \eqref{e3-4},
$$
\|\varphi\|_-^2=\|\cR\varphi\|_+^2=\|g_+\|_+^2=1.
$$
Analogously,
$$
\|\psi\|_-^2=1.
$$
Moreover, since obviously
$$
\|g_\pm\|_+^2=2\|g_\pm\|^2,
$$
we also see that  the deficiency elements  $g_\pm'\in\sN_{\pm i}$ given by
\begin{equation}\label{e-34-conv}
    g_+'=\sqrt2\calR=\sqrt2\, g_+,\qquad g_-'=\sqrt2\calR\psi=\sqrt2\, g_-
\end{equation}
are  $(\cdot)$-normalized.
\end{remark}
 {Given all that, it is also worth mentioning that  for the reminder of this paper all the results will be formulated
in terms of  the $(+)$-normalized deficiency elements $g_\pm$.}

Observe that the constructed $L$-system  $\Theta$ of the form \eqref{e-215} is in one-to-one correspondence with a parametric pair $(\kappa,U)\in [0,1)\times \mathbb T$.

Also recall that   (see \cite{BMkT-3})
\begin{equation}\label{e-26-Im}
        \IM\bA =(\cdot,\chi)\chi,
\end{equation}
and
\begin{equation}\label{e-214}
    \begin{aligned}
    \RE\bA&=\dA^*-\frac{i\sqrt{1-\kappa^2}}{\sqrt2|1+\kappa U|}(\cdot,\varphi-U\psi)\chi,
    \end{aligned}
\end{equation}
where $\chi$ is given by \eqref{e-212}.

If  the reference  self-adjoint extension $A$ satisfies  \eqref{e-21-Hyp} ($U=-1$),
then for the corresponding L-system
\begin{equation}\label{e-62-1-1}
\Theta_1= \begin{pmatrix} \bA_1&K_1&\ 1\cr \calH_+ \subset \calH \subset
\calH_-& &\dC\cr \end{pmatrix}
\end{equation}
  we have
  \begin{equation}\label{e-29-bA1}
    \bA_1=\dA^*-\frac{\sqrt2 i}{\sqrt{1-\kappa^2}}   \Big(\cdot, \kappa\varphi+\psi\Big)\chi_1,
     \end{equation}
where
\begin{equation}\label{e-18}
    \chi_1=\sqrt{\frac{1-\kappa}{2+2\kappa}}\,(\varphi- \psi)=\sqrt{\frac{1-\kappa}{1+\kappa}}\left(\frac{1}{\sqrt2}\,\varphi- \frac{1}{\sqrt2}\,\psi\right).
\end{equation}


  Also, \eqref{e-26-Im} gives us
\begin{equation}\label{e-17}
    \begin{aligned}
    \IM\bA_1&=\left(\frac{1}{2}\right)\frac{1-\kappa}{1+\kappa}(\cdot,\varphi-\psi)(\varphi- \psi)=(\cdot,\chi_1)\chi_1,
       \end{aligned}
\end{equation}
and, according to \eqref{e-214},
\begin{equation}\label{e-17-real}
    \begin{aligned}
    \RE\bA_1&=\dA^*-\frac{i}{2}(\cdot,\varphi+\psi)(\varphi-\psi).
       \end{aligned}
\end{equation}

If  condition \eqref{e-22-AntHyp}   holds, i.e.,  $U=1$,
 the entries of the corresponding
 L-system \begin{equation}\label{e-62-1-3}
\Theta_2= \begin{pmatrix} \bA_2&K_2&\ 1\cr \calH_+ \subset \calH \subset
\calH_-& &\dC\cr \end{pmatrix}
\end{equation}
are given by
\begin{equation}\label{e-29-bA2}
    \bA_2=\dA^*+\frac{\sqrt2 i}{\sqrt{1-\kappa^2}}   \Big(\cdot, \kappa\varphi+\psi\Big)\chi_2,
     \end{equation}
where
\begin{equation}\label{e-18-1}
    \chi_2=\sqrt{\frac{1+\kappa}{2-2\kappa}}\,(\varphi+ \psi)=\sqrt{\frac{1+\kappa}{1-\kappa}}\left(\frac{1}{\sqrt2}\,\varphi+ \frac{1}{\sqrt2}\,\psi\right).
\end{equation}
Also, \eqref{e-26-Im} yields
\begin{equation}\label{e-17-1}
       \IM\bA_2= \left(\frac{1}{2}\right)\frac{1+\kappa}{1-\kappa}\Big((\cdot,\varphi+\psi)(\varphi+\psi)\Big)=(\cdot,\chi_2)\chi_2,
    \end{equation}
and, according to \eqref{e-214},
\begin{equation}\label{e-32-real}
    \begin{aligned}
    \RE\bA_2&=\dA^*-\frac{i}{2}(\cdot,\varphi-\psi)(\varphi+ \psi).
    \end{aligned}
\end{equation}

Note that two L-systems $\Theta_1$ and $\Theta_2$ in \eqref{e-62-1-1} and \eqref{e-62-1-3} are constructed in a way that the quasi-kernels $\hat A_1$ of $\RE\bA_1$ and $\hat A_2$ of $\RE\bA_2$ satisfy the conditions \eqref{e-21-Hyp} or \eqref{e-22-AntHyp}, respectively, as it follows from \eqref{e-17-real} and \eqref{e-32-real}.

 {Concluding this section we would like to emphasize that formulas \eqref{e-215}--\eqref{e-212} allow us to construct an  L-system $\Theta$ that is complectly  based on a given triple $(\dot A, \whA, A)$ and a fixed $(+)$-normalized deficiency vectors $g_\pm$. Moreover, in this construction the operators  $\dA$ and $T$ become the symmetric and main operators of $\Theta$, respectively, while the self-adjoint reference extension  $A$ of the triple matches $\hat A$,  the quasi-kernel of $\RE\bA$.}

\section{Representations of $M_{(\dA,A)}(z)$ and $M_{(\dA,A_\alpha)}(z)$.}\label{s5}


Throughout this section we assume the following Hypothesis.
\begin{hypothesis}\label{hyppo} {
Assume that $\dA$ is a   densely defined closed  symmetric operator  with deficiency indices $(1, 1)$ and $\calH_+\subset\calH\subset\calH_-$  a rigged Hilbert space generated by $\dA$.  Suppose that
deficiency vectors $g_+$ and $g_-$ are $(+)$-normalized. Also assume that  and set
 $\varphi=\calR^{-1}(g_+)$ and $\psi=\calR^{-1}(g_-)$, where $\calR$ is the  Riesz-Berezansky   operator.
 }

\end{hypothesis}

\begin{theorem}\label{t-6}%
Assume Hypothesis \ref{hyppo}. Suppose, in addition, that $A$ is the self-adjoint extensions of $\dot A$ such that
  $$g_+- g_-\in \dom (A).$$

Then the Weyl-Titchmarsh functions  function $M_{(\dA,A)}(z)$ associated with the pair
$(\dot A, A)$ coincides with  the impedance function of  a unique  L-system  of the form
\begin{equation}\label{e-63-1-0}
\Theta_{10}= \begin{pmatrix} \bA_{10}&K_{10}&\ 1\cr \calH_+ \subset \calH \subset
\calH_-& &\dC\cr \end{pmatrix},
\end{equation}
where
\begin{equation}\label{e-20-10}
    \bA_{10}=\dA^*-i(\cdot,\psi)(\varphi-\psi)
\end{equation}
and $$K_{10}c=c\cdot\chi_{10},\quad (c\in\dC)$$ with
\begin{equation}\label{e-18-10}
    \chi_{10}=\frac{1}{\sqrt2}\,(\varphi- \psi).
\end{equation}

In this case, the von Neumann parameter $k$ of the main operator $T_{10}$ of the system  and the von Neumann parameter $U$ of the quasi-kernel $\hat A_{10}=A$
are given by
\begin{equation}\label{kU-1}
\kappa=0\quad \text{and}\quad U=-1.
\end{equation}

Moreover, if  $\Theta$ is  an L-system  \eqref{e6-3-2} with the symmetric operator $\dA$ such that
$$V_{\Theta}(z)=M_{(\dA,A)}(z),\quad z\in \bbC_\pm, $$
where $A=\hat A$ is the quasi-kernel of $\RE\bA$, then the von Neumann  parameters $\kappa$ and $U$  of the
operators $T$ and $\hat A$ of $\Theta$  are given by \eqref{kU-1}.
\end{theorem}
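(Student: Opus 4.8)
The plan is to treat Theorem~\ref{t-6} in three stages: produce $\Theta_{10}$ by specializing the construction of Section~\ref{s4}, identify its impedance function with $M_{(\dA,A)}$, and then establish the concluding uniqueness assertion via Proposition~\ref{prop2}. First I would specialize the L-system \eqref{e-215}--\eqref{e-212} to the parameter pair $(\kappa,U)=(0,-1)$. The value $U=-1$ is precisely the one encoding the hypothesis $g_+-g_-\in\dom(A)$ (condition \eqref{e-21-Hyp}), while $\kappa=0$ is the value to be matched. Substituting $\kappa=0$, $U=-1$ into \eqref{e-212} collapses the two coefficients to $\tfrac{1}{\sqrt2}$ and $-\tfrac{1}{\sqrt2}$, giving $\chi=\tfrac{1}{\sqrt2}(\varphi-\psi)=\chi_{10}$ as in \eqref{e-18-10}; substituting into \eqref{e-205-A} reduces the scalar factor to $-\sqrt2\,i$ and the vector $\kappa\varphi+\psi$ to $\psi$, so that $\bA_{10}=\dA^*-i(\cdot,\psi)(\varphi-\psi)$ as in \eqref{e-20-10}. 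Since the $(*)$-extension furnished by this construction is unique and the correspondence $(\kappa,U)\leftrightarrow\Theta$ is one-to-one (noted after \eqref{e-214}), existence and uniqueness of $\Theta_{10}$ follow once the impedance is matched.

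To identify the impedance function I would use \eqref{e6-3-5} with $\RE\bA_{10}$ read off from its $\kappa=0$ form \eqref{e-17-real} and $K_{10}c=c\,\chi_{10}$, so that $V_{\Theta_{10}}(z)=((\RE\bA_{10}-zI)^{-1}\chi_{10},\chi_{10})$. Rather than compute this rank-one resolvent by hand, I would invoke Proposition~\ref{prop2}: since $U=-1$, condition \eqref{e-21-Hyp} holds and $V_{\Theta_{10}}\in\sM_\kappa=\sM_0$, and then appeal to the realization results of \cite{BMkT,BMkT-2} (equivalently, to the fact that the Weyl-Titchmarsh/Liv\v sic function is a complete unitary invariant of the pair $(\dA,A)$) to conclude $V_{\Theta_{10}}=M_{(\dA,A)}$. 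The stated values $\kappa=0$, $U=-1$ in \eqref{kU-1} for $\Theta_{10}$ are then immediate from the choice of parameters.

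For the concluding statement, let $\Theta$ be any L-system with symmetric operator $\dA$, quasi-kernel $\hat A=A$, and $V_\Theta=M_{(\dA,A)}$. By the Section~\ref{s4} parametrization it corresponds to a unique pair $(\kappa,U)\in[0,1)\times\mathbb T$, with $g_+-\kappa g_-\in\dom(T)$ and $g_++Ug_-\in\dom(\hat A)$. The quasi-kernel fixes $U$: by von Neumann's formula \eqref{DOMHAT} the intersection $\dom(\hat A)\cap(\sN_i\dot +\sN_{-i})$ is one-dimensional, and since $\hat A=A$ it contains both $g_+-g_-$ and $g_++Ug_-$; comparing the $g_+$-components forces $U=-1$. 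With $U=-1$ Proposition~\ref{prop2} gives $V_\Theta\in\sM_\kappa$, whose norming constant is $\tfrac{1-\kappa}{1+\kappa}$ by \eqref{e-19-kappa}. Reading this constant off \eqref{e-60-nu} as $a=\IM V_\Theta(i)$ and using $V_\Theta=M_{(\dA,A)}\in\sM_0$, i.e. $\IM V_\Theta(i)=1$, the equation $\tfrac{1-\kappa}{1+\kappa}=1$ forces $\kappa=0$, establishing \eqref{kU-1}.

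The substitutions in the construction and the determination $U=-1$ are routine; the main obstacle is the exact identification $V_{\Theta_{10}}=M_{(\dA,A)}$, since membership in $\sM_0$ does not by itself pin a function down within that class. Overcoming this requires the role of $\hat A=A$ as the quasi-kernel together with completeness of the Liv\v sic invariant for prime $\dA$. The only delicate point in the converse is that the norming constant $a=\IM V_\Theta(i)$ genuinely separates the classes $\{\sM_\kappa\}_{\kappa\in[0,1)}$, which follows from the strict monotonicity of $\kappa\mapsto\tfrac{1-\kappa}{1+\kappa}$.
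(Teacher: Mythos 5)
Your construction of $\Theta_{10}$ is exactly the paper's: the paper also obtains \eqref{e-20-10} and \eqref{e-18-10} by setting $\kappa=0$, $U=-1$ in the Section \ref{s4} system (it works from \eqref{e-17}--\eqref{e-17-real}, i.e.\ $\Theta_1$ with $\kappa=0$), so that part matches. The two remaining steps are where you and the paper part ways. For the identification $V_{\Theta_{10}}=M_{(\dA,A)}$, the paper does not argue via class membership at all: it invokes the model triple $(\dot \cB,\whB,\cB)$ of \cite{MT-S} and the model L-system of \cite[Theorem 7]{BMkT} built on it (which realizes $M_{(\dA,A)}$ with $\kappa=0$, $U=-1$), and then transports this to $\Theta_{10}$ through the unitary equivalence of triples supplied by \cite[Theorems 1.4, 4.1]{MT-S}. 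Your route --- Proposition \ref{prop2} gives $V_{\Theta_{10}}\in\sM_0$, then ``realization results plus completeness of the invariant'' --- has a soft spot that you yourself flag: membership in $\sM_0$ together with the fact that $M$ is a complete unitary invariant of a \emph{pair} is literally not enough, because one must first know that the minimal L-system realizing $V_{\Theta_{10}}$ is unique up to unitary equivalence \emph{and} that this equivalence carries the quasi-kernel $\hat A=A$ onto the self-adjoint operator of the model pair; only then does completeness yield $V_{\Theta_{10}}=M_{(\dA,\hat A)}=M_{(\dA,A)}$. That missing ingredient is precisely what \cite[Theorem 7]{BMkT} combined with \cite{MT-S} provides, so your plan can be closed, but as written it elides the step that does the real work --- the step the paper makes explicit. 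By contrast, your proof of the ``moreover'' part is genuinely different from, and more elementary than, the paper's: you pin $U=-1$ by noting that $g_+-g_-$ and $g_++Ug_-$ both lie in the one-dimensional defect part of $\dom(A)$ (no nonzero vector of $\sN_{-i}$ can lie in $\dom(A)$), and you pin $\kappa=0$ from the norming constant $\IM V_\Theta(i)=1$ via Proposition \ref{prop2} and \eqref{e-19-kappa}, whereas the paper re-runs the unitary-equivalence-with-the-model-triple argument and appeals to invariance of von Neumann parameters. Your converse buys self-containedness (no model theory needed), at the modest cost of implicitly using the hypotheses of Proposition \ref{prop2} (minimality of $\Theta$, non-constancy of $M_{(\dA,A)}$) --- an imprecision the paper's converse shares.
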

\begin{proof}
We begin by recalling the definition of the model triple \cite {MT-S,MTBook}. Given a function $M(z)$ from the Donoghue class $\sM$ and a $\kappa\in \bbC$, $|\kappa|<1$,
in the Hilbert space $L^2(\bbR;d\sigma)$, where is the representing measure  for the function $M(z)$ via \eqref{hernev-0}, introduce  the    (self-adjoint) operator  $\cB$  of multiplication   by  independent variable  on
\begin{equation}\label{nacha1}
\dom(\cB)=\left \{f\in \,L^2(\bbR;d\sigma) \,\bigg | \, \int_\bbR
\lambda^2 | f(\lambda)|^2d \sigma(\lambda)<\infty \right \}
\end{equation}
and denote by  $\dot \cB$  its symmetric
 restriction
on
\begin{equation}\label{nacha2}
\dom(\dot \cB)=\left \{f\in \dom(\cB)\, \bigg | \, \int_\bbR
f(\lambda)d \sigma(\lambda) =0\right \}.
\end{equation}
Next, introduce
 $\whB$  as   the dissipative quasi-self-adjoint extension  of the  symmetric operator  $\dot \cB$
 on
\begin{equation}\label{nacha3}
\dom(\whB)=\dom (\dot \cB)\dot +\linspan\left
\{\,\frac{1}{\lambda -i}- \kappa\frac{1}{\lambda +i}\right \},
\end{equation}
 Notice that in this case,
\begin{equation}\label{kone}
\dom(\cB)=\dom (\dot \cB)\dot +\linspan\left\{\,\frac{1}{\lambda -i}- \frac{1}{\lambda+i}\right \}.
\end{equation}
We  refer to the triple  $(\dot \cB,   \whB,\cB)$ as {\it  the model  triple }(see
\cite{MT-S,MTBook}) in the Hilbert space $L^2(\bbR;d\sigma)$. It was shown in  \cite[Theorems 1.4, 4.1]{MT-S}
 {that if $\kappa$ is the von Neumann parameter of the dissipative operator $T_A$ with respect to a basis $\{g_\pm\}$, $\|g_+\|=\|g_-\|$, in the deficiency subspaces,
that is,
$$
g_+-\kappa g_-\in \dom (T),
$$
and
$$
g_+-g_-\in \dom (A),
$$
}
then the triple  $(\dot A,T_A,  A)$ is mutually unitarily equivalent to the model triple $(\dot \cB, \whB,  \cB )$ in the  Hilbert space $L^2(\bbR;d\mu)$, where $\mu(d\lambda)$ is the representing measure for the Weyl-Titchmarsh function $M_{(\dA, A)}(z)$.

In \cite[Theorem 7]{BMkT} we have shown how to construct a model L-system that realizes the function $M_{(\dA,A)}(z)$ with the parameters $\kappa=0$ and $U=-1$. The model L-system constructed is uniquely based on the model triple $(\dot \cB, \whB, \cB)$ of the form \eqref{nacha1}--\eqref{nacha3} with $\kappa=0$. This model triple is unitarily equivalent to the triple $(\dA, T_A, A)$ that is included in the L-system $\Theta_1$ in \eqref{e-62-1-1} if we set $\kappa=0$.
Thus, in order to prove the first part of the theorem,  all we have to do is set $\kappa=0$ for  the L-system $\Theta_1$ in \eqref{e-62-1-1} and then refer to \cite[Theorems 1.4, 4.1]{MT-S} to conclude that $M_{(\dA,A)}(z)$ coincides with the impedance function of an L-system with the same set of parameters $\kappa$ and $U$.  We obtain (see \eqref{e-17} -- \eqref{e-17-real}) that
\begin{equation}\label{e-19-10}
    \begin{aligned}
    \IM\bA_{10}&=(\cdot,\chi_{10})\chi_{10},\quad \RE\bA_{10}=\dA^*-\frac{i}{2}(\cdot,\varphi+\psi)(\varphi-\psi),
       \end{aligned}
\end{equation}
and $\chi_{10}$ is defined by \eqref{e-18-10}. Then \eqref{e-20-10}  immediately follows from \eqref{e-19-10}.

Now let us assume that $\Theta$ is   an L-system  with the symmetric operator $\dA$ described in the statement of the theorem  such that $V_{\Theta}(z)=M_{(\dA,A)}(z)$. Then, according to \cite[Theorems 1.4, 4.1]{MT-S}, the underlying triple $(\dA, T, \hat A)$ of $\Theta$ is unitarily equivalent to the model triple $(\dot \cB, \whB, \cB)$ of the form \eqref{nacha1}--\eqref{nacha3} with $\kappa=0$ that is used in the construction of the model L-system realizing $M_{(\dA,A)}(z)$. Consequently, by unitary invariance, both triples $(\dot \cB, \whB, \cB)$ and $(\dA, T, \hat A)$ have the same set of von Neumann's parameters, that is $\kappa=0$ and $U=-1$.
\end{proof}


More generally, we have the following representations theorem.

\begin{theorem}\label{t-8}%
 Assume Hypothesis \ref{hyppo}. Suppose, in addition, that $A$ is the self-adjoint extensions of $\dot A$ such that
 $$g_+-e^{2i\alpha} g_-\in \dom (A_\alpha).$$

  Then  the  Weyl-Titchmarsh function of the pair $(\dA,A_\alpha)$,
coincides with the impedance function of  a unique  L-system of the form
\begin{equation}\label{e-63-alpha-0}
\Theta_{\alpha0}= \begin{pmatrix} \bA_{\alpha0}&K_{\alpha0}&\ 1\cr \calH_+ \subset \calH \subset
\calH_-& &\dC\cr \end{pmatrix},
\end{equation}
where
\begin{equation}\label{e-20-alpha}
    \bA_{\alpha0}=\dA^*-i(\cdot, \psi)(e^{-2i\alpha}\varphi-\psi)
\end{equation}
and
$$K_{\alpha0}c=c\cdot\chi_{\alpha0},\quad (c\in\dC)$$ with
\begin{equation}\label{e-18-alpha}
    \chi_{\alpha0}=\frac{1}{\sqrt2}(\varphi-e^{2i\alpha}\psi).
\end{equation}
In this case, the von Neumann parameter $k$ of the main operator $T_{\alpha0}$ of the system  and the von Neumann parameter $U$ of the quasi-kernel $\hat A_{\alpha0}=A$
are given by
\begin{equation}\label{kUb}
\kappa=0\quad \text{and}\quad  U=
-e^{2i\alpha}.
\end{equation}

Moreover, if  $\Theta$ is  an L-system  \eqref{e6-3-2} with the symmetric operator $\dA$ such that
$$V_{\Theta}(z)=M_{(\dA,A)}(z),\quad z\in \bbC_+, $$
where $A=\hat A$ is the quasi-kernel of $\RE\bA$, then the von Neumann  parameters $\kappa$ and $U$  of the
operators $T$ and $\hat A$ of $\Theta$  are given by \eqref{kUb}.

\end{theorem}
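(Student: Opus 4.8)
The plan is to mirror the proof of Theorem \ref{t-6}, specializing the general L-system construction of Section \ref{s4} to the parameter pair $(\kappa, U) = (0, -e^{2i\alpha})$ and then invoking the model-triple unitary equivalence to identify the impedance function. First I would substitute $\kappa = 0$ and $U = -e^{2i\alpha}$ into the general formulas \eqref{e-205-A}, \eqref{e-212}, and \eqref{e-214}. With $\kappa = 0$ the expression \eqref{e-212} collapses to $\chi = \tfrac{1}{\sqrt2}(\varphi + U\psi)$, which under $U = -e^{2i\alpha}$ is exactly \eqref{e-18-alpha}; similarly \eqref{e-205-A} reduces to $\bA = \dA^* + \sqrt2\, i\,\bar U(\,\cdot\,,\psi)\chi$, and inserting $\bar U = -e^{-2i\alpha}$ together with $\chi = \chi_{\alpha0}$ reproduces \eqref{e-20-alpha}. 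The same substitution in \eqref{e-214} gives $\RE\bA_{\alpha0} = \dA^* - \tfrac{i}{2}(\,\cdot\,,\varphi + e^{2i\alpha}\psi)(\varphi - e^{2i\alpha}\psi)$, from which one reads off that the quasi-kernel $\hat A_{\alpha0}$ satisfies $g_+ - e^{2i\alpha}g_- \in \dom(\hat A_{\alpha0})$, i.e.\ $\hat A_{\alpha0} = A_\alpha$ and $U = -e^{2i\alpha}$; since $g_+ \in \dom(T_{\alpha0})$ by construction, the main-operator parameter is $\kappa = 0$, establishing \eqref{kUb}.

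To identify the impedance function I would proceed exactly as in Theorem \ref{t-6}. A direct evaluation of the transformation law \eqref{transm} at $z = i$ shows $M_{(\dA, A_\alpha)}(i) = i$, so $M_{(\dA, A_\alpha)}$ again belongs to the Donoghue class $\sM$ and hence admits a model triple $(\dot\cB, \whB, \cB)$ of the form \eqref{nacha1}--\eqref{nacha3} with $\kappa = 0$. By \cite[Theorems 1.4, 4.1]{MT-S} this model triple is unitarily equivalent to $(\dA, T_{\alpha0}, \hat A_{\alpha0})$, and since the impedance function is a unitary invariant of the triple, $V_{\Theta_{\alpha0}}(z) = M_{(\dA, A_\alpha)}(z)$; uniqueness is automatic because the entries \eqref{e-20-alpha}--\eqref{e-18-alpha} are completely determined by $(\kappa, U)$. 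The ``Moreover'' part then follows by the same invariance argument: any L-system with symmetric operator $\dA$, impedance $M_{(\dA, A_\alpha)}$, and quasi-kernel $A_\alpha$ has its triple $(\dA, T, \hat A)$ unitarily equivalent to the same model triple, forcing $(\kappa, U) = (0, -e^{2i\alpha})$.

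The step I expect to be the main obstacle is the careful tracking of the phase $e^{2i\alpha}$. The model triple carries a reference extension whose von Neumann parameter is $-1$ relative to its own canonical deficiency basis, whereas the fixed basis $\{g_\pm\}$ makes the reference extension $A_\alpha$ have parameter $U = -e^{2i\alpha}$. Reconciling the two --- equivalently, checking that the phase appearing in the Liv\v{s}ic transformation $s(\dA, A_\alpha) = e^{2i\alpha}s(\dA, A)$ is precisely the phase entering $U$ --- is the delicate point, and it is exactly where the transformation law \eqref{transm} must be used rather than merely quoting Theorem \ref{t-6}.
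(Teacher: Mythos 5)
Your proposal is correct and follows essentially the same route as the paper: the paper's own proof both reduces to Theorem~\ref{t-6} (by replacing $g_-$ with the rotated deficiency vector $g_-'=e^{2i\alpha}g_-$, so that $g_+-g_-'\in\dom(A_\alpha)$) and, as an alternative, performs exactly your substitution of $(\kappa,U)=(0,-e^{2i\alpha})$ into \eqref{e-212}--\eqref{e-215}, arriving at \eqref{e-20-alpha} and \eqref{e-18-alpha}. The phase-reconciliation obstacle you flag at the end is handled in the paper precisely by that basis rotation --- which makes Theorem~\ref{t-6} and its model-triple argument apply verbatim to the pair $(g_+,g_-')$ --- rather than by invoking the transformation law \eqref{transm}.
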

\begin{proof}
Consider $g_-'=(e^{2i\alpha})g_-$. Clearly, $g_-'$ is another $(+)$-normalized deficiency vector in $\Ker( \dA^*+iI)$ and is such that $g_+-g_-'\in \dom (A_\alpha)$. Consequently, all the conditions of Theorem \ref{t-6} apply to the pair of normalized deficiency vectors $g_+$ and $g_-'$ and the  Weyl-Titchmarsh function $M_{(\dA,A_\alpha)}(z)$ of the pair $(\dA,A_\alpha)$. Thus we can construct a realizing $M_{(\dA,A_\alpha)}(z)$ L-system $\Theta_{\alpha0}$ out of L-system $\Theta_{10}$ in \eqref{e-63-1-0} simply by replacing $\psi$ in formulas \eqref{e-20-10}-\eqref{e-18-10} with $\psi'= \calR^{-1}(g_-')$.

Alternatively, we can apply the formulas \eqref{e-212}--\eqref{e-215} with a new set of parameters. In particular, if $U=-e^{2i\alpha}$ and $\kappa=0$ the formulas \eqref{e-212}--\eqref{e-215} become
\begin{equation*}\label{e-212-alpha}
    \chi_{0\alpha}=\frac{1}{\sqrt2}(\varphi-e^{2i\alpha}\psi),
\end{equation*}
and
\begin{equation*}\label{e-214-alpha}
    \begin{aligned}
    \IM\bA_{0\alpha}&=(\cdot,\chi_{0\alpha})\chi_{0\alpha},\quad \RE\bA_{0\alpha}=\dA^*-\frac{i}{\sqrt2}(\cdot,\varphi+e^{2i\alpha}\psi)\chi_{0\alpha},
       \end{aligned}
\end{equation*}
yielding (see also  \eqref{e-205-A})
\begin{equation*}\label{e-205-Alpha}
\begin{aligned}
\bA_{0\alpha}&=\RE\bA_{0\alpha}+i\IM\bA_{0\alpha}=\dA^*-\frac{i}{\sqrt2}\left[(\cdot,\varphi+e^{2i\alpha}\psi)- (\cdot,\varphi-e^{2i\alpha}\psi)\right]\chi_{0\alpha}\\
&=\dA^*-\frac{i}{\sqrt2}(\cdot, 2e^{2i\alpha}\psi)\chi_{0\alpha}=\dA^*-\sqrt2\, ie^{-2i\alpha}(\cdot, \psi)\chi_{0\alpha}\\
&=\dA^*-\sqrt2\, ie^{-2i\alpha}(\cdot, \psi)\frac{1}{\sqrt2}(\varphi-e^{2i\alpha}\psi)=\dA^*-i(\cdot, \psi)(e^{-2i\alpha}\varphi-\psi).
     \end{aligned}
   \end{equation*}
The above formulas confirm \eqref{e-20-alpha} and \eqref{e-18-alpha}.   The corresponding L-system is given by \eqref{e-63-alpha-0} then.

In order to prove the second part of the theorem statement we simply repeat the argument used in the proof of Theorem \ref{t-6} with  $\kappa=0$ and $U=-e^{-2i\alpha}$.
  \end{proof}

Finally, we address a slightly  more general problem of representing  {the functions $aM_{(\dA,A)}(z)$, where
$M_{(\dA,A)}(z)$ is the Weyl-Titchmarsh function associated with the pair $(\dot A, A)$,  with an  arbitrary  $a>0$.}

 Observe that from the definition of the  generalized Donoghue classes  $\sM_\kappa$ and $\sM_\kappa^{-1}$ (see \eqref{e-19-kappa}) it follows that $$aM_{(\dA,A)}(z)\in \sM_\kappa,$$ with
\begin{equation}\label{e-45-kappa-1-new}
\kappa=\frac{1-a}{1+a},
\end{equation}
whenever $0<a<1$
and $$aM_{(\dA,A)}(z)\in \sM_\kappa^{-1},$$ with
\begin{equation}\label{e-45-kappa-2-new}
\kappa=\frac{a-1}{1+a}.
 \end{equation}
 for $a>1$.

\begin{theorem}\label{t-9}%
Assume Hypothesis \ref{hyppo}. Suppose, in addition, that $A$ is the self-adjoint extensions of $\dot A$ such that
  $$g_+- g_-\in \dom (A).$$

 Let  $M_{(\dA,A)}(z)$ denote the  Weyl-Titchmarsh function of the pair $(\dA,A)$.

Then the function $aM_{(\dA,A)}(z)$, ($0<a<1$) coincides with the impedance function of  a unique  L-system  of the form
\begin{equation}\label{e-70-k}
\Theta_{1a}= \begin{pmatrix} \bA_{1a}&K_{1a}&\ 1\cr \calH_+ \subset \calH \subset
\calH_-& &\dC\cr \end{pmatrix},
\end{equation}
where
\begin{equation}\label{e-71-k}
    \bA_{1a}=\dA^*-\frac{i}{2}\left(\cdot,(1-a)\varphi+(1+a)\psi\right)(\varphi-\psi).
\end{equation}
and $$K_{1a}c=c\cdot\chi_{1a}, \quad (c\in\dC)$$ with
\begin{equation}\label{e-72-k}
    \chi_{1a}=\sqrt{\frac{a}{2}}\,(\varphi- \psi).
\end{equation}

In this case, the von Neumann parameter $k$ of the main operator $T_{1a}$ of the system  and the von Neumann parameter $U$ of the quasi-kernel $\hat A_{1a}=A$   are given by
\begin{equation}\label{kU}
\kappa=\frac{1-a}{1+a} \quad \text{and}\quad U=-1.\end{equation}

Moreover, if  $\Theta$ is  an L-system  \eqref{e6-3-2} with the symmetric operator $\dA$ such that
$$V_{\Theta}(z)=aM_{(\dA,A)}(z),\quad z\in \bbC_+, $$
where $A=\hat A$ is the quasi-kernel of $\RE\bA$, then the von Neumann  parameters $\kappa$ and $U$  of the
operators $T$ and $\hat A$ of $\Theta$  are given by  \eqref{kU}.

\end{theorem}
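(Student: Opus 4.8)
The plan is to reduce this theorem to Theorem \ref{t-6} by exploiting the fact that the renormalization factor $a$ with $0<a<1$ shifts the von Neumann parameter from $\kappa=0$ to $\kappa=\frac{1-a}{1+a}$ while keeping $U=-1$ fixed, and then to verify directly that the explicit L-system $\Theta_{1a}$ produced by the general construction of Section \ref{s4} realizes $aM_{(\dA,A)}(z)$ with exactly these parameters. First I would invoke the relation \eqref{e-45-kappa-1-new}, already established immediately before the theorem, to record that $aM_{(\dA,A)}(z)\in\sM_\kappa$ with $\kappa=\frac{1-a}{1+a}$ whenever $0<a<1$. The main structural point is that the symmetric operator $\dA$ and the reference self-adjoint extension $A$ with $g_+-g_-\in\dom(A)$ are unchanged; only the \emph{main} dissipative operator $T_{1a}$ differs from $T_{10}$, having a nonzero von Neumann parameter $\kappa$.

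Next I would apply the explicit formulas \eqref{e-205-A}--\eqref{e-214} of Section \ref{s4} with the parameter choice $(\kappa,U)=\left(\tfrac{1-a}{1+a},-1\right)$. Substituting $U=-1$ into \eqref{e-212} and \eqref{e-214} and simplifying, I expect the channel vector to collapse to a multiple of $\varphi-\psi$; a direct computation should yield $\chi_{1a}=\sqrt{\tfrac{a}{2}}\,(\varphi-\psi)$, matching \eqref{e-72-k}, since the scalar prefactors involving $\kappa=\tfrac{1-a}{1+a}$ combine to $\sqrt{a}$ after using $1-\kappa^2=\tfrac{4a}{(1+a)^2}$ and $|1+\kappa U|=|1-\kappa|=\tfrac{2a}{1+a}$. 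Then \eqref{e-26-Im} gives $\IM\bA_{1a}=(\cdot,\chi_{1a})\chi_{1a}$ automatically, and \eqref{e-214} together with the same scalar simplification should produce $\RE\bA_{1a}=\dA^*-\tfrac{i}{2}(\cdot,\varphi+\psi)(\varphi-\psi)$, which is the \emph{same} quasi-kernel real part as in \eqref{e-17-real}; this confirms that the quasi-kernel $\hat A_{1a}$ equals $A$ and hence $U=-1$. Adding $\RE\bA_{1a}+i\IM\bA_{1a}$ and regrouping the terms $(\cdot,\varphi+\psi)$ and $(\cdot,\chi_{1a})$ should reproduce the stated formula \eqref{e-71-k} for $\bA_{1a}$.

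For the identification of the impedance function, I would compute $V_{\Theta_{1a}}(z)=K_{1a}^*(\RE\bA_{1a}-zI)^{-1}K_{1a}$ using \eqref{e6-3-5}; since $\RE\bA_{1a}=\RE\bA_{10}$ and $K_{1a}=\sqrt{a}\,K_{10}$ (because $\chi_{1a}=\sqrt{a}\,\chi_{10}$ by comparison with \eqref{e-18-10}), the resolvent form gives $V_{\Theta_{1a}}(z)=a\,V_{\Theta_{10}}(z)=a\,M_{(\dA,A)}(z)$, where the last equality is Theorem \ref{t-6}. This scaling observation is the cleanest route and sidesteps re-deriving the integral representation. For the uniqueness and the converse (\emph{Moreover}) statement, I would argue exactly as in the final paragraph of the proof of Theorem \ref{t-6}: any L-system $\Theta$ with symmetric operator $\dA$ and $V_\Theta(z)=aM_{(\dA,A)}(z)$ has impedance function in $\sM_\kappa$ with $\kappa=\tfrac{1-a}{1+a}$, so by the model-triple classification \cite[Theorems 1.4, 4.1]{MT-S} its underlying triple $(\dA,T,\hat A)$ is unitarily equivalent to the model triple $(\dot\cB,\whB,\cB)$ built with this same $\kappa$ and with the condition $g_+-g_-\in\dom(\hat A)$ forcing $U=-1$; by unitary invariance of the von Neumann parameters, \eqref{kU} follows.

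The step I expect to be the main obstacle is the bookkeeping in the scalar simplifications of \eqref{e-212} and \eqref{e-214} at $U=-1$: one must carefully track the normalization constants $\sqrt{1-\kappa^2}$, $|1+\kappa U|$, and the coefficient $\tfrac{\sqrt2\,i(\kappa+\bar U)}{|1+\kappa U|\sqrt{1-\kappa^2}}$ in \eqref{e-205-A} to confirm that the $a$-dependence enters precisely as the factor $\sqrt{a}$ in $\chi_{1a}$ and leaves $\RE\bA$ unchanged. Everything else is either a direct appeal to Theorem \ref{t-6} or a formal resolvent computation.
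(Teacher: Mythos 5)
Your proposal is correct, and most of it runs parallel to the paper's own proof: the paper likewise obtains \eqref{e-71-k}--\eqref{e-72-k} by substituting $\kappa=\frac{1-a}{1+a}$ and $U=-1$ into the Section~\ref{s4} construction (specifically into \eqref{e-17}--\eqref{e-17-real}, which are the $U=-1$ specializations of \eqref{e-205-A}--\eqref{e-214} that you work with), and it handles the converse statement exactly as you do, by repeating the model-triple/unitary-invariance argument from the proof of Theorem~\ref{t-6}. Your scalar bookkeeping checks out: $1-\kappa^2=\frac{4a}{(1+a)^2}$, $|1+\kappa U|=1-\kappa=\frac{2a}{1+a}$, giving $\chi_{1a}=\sqrt{a}\,\chi_{10}$ and an $a$-independent $\RE\bA_{1a}=\dA^*-\frac{i}{2}(\cdot,\varphi+\psi)(\varphi-\psi)$, whence \eqref{e-71-k} follows by adding $i\,\IM\bA_{1a}$. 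Where you genuinely depart from the paper is the identification of the impedance function: the paper simply ``follows the steps'' of Theorem~\ref{t-6}, i.e.\ re-invokes the realization machinery (the model triple of \cite{MT-S} and the model L-system of \cite{BMkT}) with the new value of $\kappa$, whereas you observe that since $\RE\bA_{1a}=\RE\bA_{10}$ and $K_{1a}=\sqrt{a}\,K_{10}$, formula \eqref{e6-3-5} yields $V_{\Theta_{1a}}(z)=a\,V_{\Theta_{10}}(z)=a\,M_{(\dA,A)}(z)$ directly from the \emph{conclusion} of Theorem~\ref{t-6}. This scaling argument is cleaner and more self-contained — it trades an appeal to external realization theorems for a one-line resolvent identity — though it is available only because the real parts coincide, a fact your computation establishes anyway; the paper's route has the mild advantage of being uniform with the proofs of Theorems~\ref{t-6}, \ref{t-8}, and \ref{t-10}.
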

  \begin{proof}
  We follow the steps of the proof of Theorem \ref{t-6} and hence use formulas \eqref{e-17} -- \eqref{e-17-real} with parameters $\kappa$ represented in the form \eqref{e-45-kappa-1-new} and $U=-1$. We get
  \begin{equation}\label{e-72-A}
    \begin{aligned}
    \IM\bA_{1a}&=(\cdot,\chi_{1a})\chi_{1a}={\frac{\sqrt{a}}{\sqrt2}}(\cdot,\varphi- \psi)\chi_{1a},\\ \RE\bA_{1a}&=\dA^*-\frac{i}{2}(\cdot,\varphi+\psi)(\varphi-\psi)=\dA^*-\frac{i}{\sqrt{2a}}(\cdot,\varphi+\psi)\chi_{1a},
       \end{aligned}
\end{equation}
and $\chi_{1a}$ is defined by \eqref{e-72-k}. Then
$$
\begin{aligned}
\bA_{1a}&=\RE\bA_{1a}+i\IM\bA_{1a}=\dA^*-\frac{i}{\sqrt{2a}}\left[(\cdot,\varphi+\psi)-a(\cdot,\varphi-\psi)\right]\chi_{1a}\\
&=\dA^*-\frac{i}{\sqrt{2a}}\left[(\cdot,\varphi+\psi)-a(\cdot,\varphi-\psi)\right]\frac{\sqrt{a}}{\sqrt2}\,(\varphi- \psi)\\
&=\dA^*-\frac{i}{2}\left(\cdot,\varphi-a\varphi+\psi+a\psi\right)(\varphi-\psi)\\
&=\dA^*-\frac{i}{2}\left(\cdot,(1-a)\varphi+(1+a)\psi\right)(\varphi-\psi).
     \end{aligned}
$$
This confirms \eqref{e-71-k}.

In order to prove the second part of the theorem statement we simply repeat the argument used in the proof of Theorem \ref{t-6} with  $\kappa=\frac{1-a}{1+a}$ and $U=-1$.
  \end{proof}
A similar result holds  for $a>1$.

\begin{theorem}\label{t-10}%

Assume Hypothesis \ref{hyppo}. Suppose, in addition, that $A$ is the self-adjoint extensions of $\dot A$ such that
  $$g_++ g_-\in \dom (A).$$

 Let  $M_{(\dA,A)}(z)$ denote the  Weyl-Titchmarsh function of the pair $(\dA,A)$.

Then the function $aM_{(\dA,A)}(z)$, ($a>1$) coincides with the impedance function of  a unique  L-system  of the form
\begin{equation}\label{e-74-k}
\Theta_{1a}= \begin{pmatrix} \bA_{1a}&K_{1a}&\ 1\cr \calH_+ \subset \calH \subset
\calH_-& &\dC\cr \end{pmatrix},
\end{equation}
where
\begin{equation}\label{e-75-k}
    \bA_{1a}=\dA^*-\frac{i}{2}\big(\cdot,(1-a)\varphi-(1+a)\psi\big)(\varphi+\psi).
\end{equation}
and $$K_{1a}c=c\cdot\chi_{1a}, \quad (c\in\dC)$$ with
\begin{equation}\label{e-76-k}
    \chi_{1a}=\sqrt{\frac{a}{2}}\,(\varphi+ \psi).
\end{equation}

In this case, the von Neumann parameter $k$ of the main operator $T_{1a}$ of the system  and the von Neumann parameter $U$ of the quasi-kernel $\hat A_{1a}=A$   are given by
\begin{equation}\label{kU1}
\kappa=\frac{a-1}{1+a} \quad \text{and}\quad U=1.\end{equation}

Moreover, if  $\Theta$ is  an L-system  \eqref{e6-3-2} with the symmetric operator $\dA$ such that
$$V_{\Theta}(z)=aM_{(\dA,A)}(z),\quad z\in \bbC_+, $$
where $A=\hat A$ is the quasi-kernel of $\RE\bA$, then the von Neumann  parameters $\kappa$ and $U$  of the
operators $T$ and $\hat A$ of $\Theta$  are given by  \eqref{kU1}.
\end{theorem}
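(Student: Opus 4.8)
The plan is to mirror the proof of Theorem~\ref{t-9}, the only structural change being that the hypothesis \eqref{e-21-Hyp} (equivalently $U=-1$) is replaced by the anti-hypothesis \eqref{e-22-AntHyp} (equivalently $U=1$), which is the branch of the Section~\ref{s4} construction appropriate to the regime $a>1$. Indeed, since $M_{(\dA,A)}(z)$ belongs to the Donoghue class $\sM$ and $a>1$, the re-normalized function $aM_{(\dA,A)}(z)$ lies in $\sM_\kappa^{-1}$ with $\kappa=\frac{a-1}{1+a}$ by \eqref{e-45-kappa-2-new}; by Proposition~\ref{prop2} a function of class $\sM_\kappa^{-1}$ is realized by an L-system whose self-adjoint reference extension satisfies \eqref{e-22-AntHyp}, which pins down $U=1$. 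Thus I would work throughout with the L-system $\Theta_2$ of \eqref{e-62-1-3} rather than $\Theta_1$.

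Next I would specialize the $U=1$ formulas \eqref{e-18-1}, \eqref{e-17-1}, and \eqref{e-32-real} at $\kappa=\frac{a-1}{1+a}$. The elementary identities $1+\kappa=\frac{2a}{1+a}$ and $1-\kappa=\frac{2}{1+a}$ give $\frac{1+\kappa}{2-2\kappa}=\frac a2$ and $\frac{1+\kappa}{1-\kappa}=a$, so that \eqref{e-18-1} becomes $\chi_{1a}=\sqrt{a/2}\,(\varphi+\psi)$, confirming \eqref{e-76-k}, and \eqref{e-17-1} becomes $\IM\bA_{1a}=(\cdot,\chi_{1a})\chi_{1a}=\frac a2(\cdot,\varphi+\psi)(\varphi+\psi)$. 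Crucially, the real part \eqref{e-32-real} is $\kappa$-independent after simplification (it is determined solely by the quasi-kernel fixed by $U=1$), so $\RE\bA_{1a}=\dA^*-\frac i2(\cdot,\varphi-\psi)(\varphi+\psi)$. Reassembling $\bA_{1a}=\RE\bA_{1a}+i\,\IM\bA_{1a}$, observing that both rank-one terms act through the common vector $\varphi+\psi$, and collecting the coefficients of $\varphi$ and $\psi$ inside the functional then yields $\bA_{1a}=\dA^*-\frac i2\big(\cdot,(1-a)\varphi-(1+a)\psi\big)(\varphi+\psi)$, which is \eqref{e-75-k}. This algebra is routine and entirely parallel to the computation in Theorem~\ref{t-9}, with the roles of $\varphi-\psi$ and $\varphi+\psi$ interchanged.

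It remains to identify the impedance function with $aM_{(\dA,A)}(z)$ and to establish the uniqueness (the ``Moreover'' clause). For this I would repeat the model-triple argument of Theorem~\ref{t-6}: the triple $(\dA,T_{1a},A)$ carried by $\Theta_{1a}$ is unitarily equivalent to the model triple $(\dot\cB,\whB,\cB)$ of \eqref{nacha1}--\eqref{nacha3} built from the measure representing $aM_{(\dA,A)}(z)$ with von Neumann parameter $\kappa=\frac{a-1}{1+a}$; since the impedance function is a complete unitary invariant of the triple, $V_{\Theta_{1a}}(z)=aM_{(\dA,A)}(z)$. The converse statement follows the same way: any L-system with $V_\Theta(z)=aM_{(\dA,A)}(z)$ has its underlying triple unitarily equivalent to this model triple, so by unitary invariance its von Neumann parameters must be $\kappa=\frac{a-1}{1+a}$ and $U=1$, i.e.\ \eqref{kU1}. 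I expect the only genuine subtlety --- as opposed to the routine assembly of $\bA_{1a}$ --- to be the verification that the $a>1$ regime forces the anti-hypothesis branch $U=1$ (rather than $U=-1$) and that the realized function is exactly $aM_{(\dA,A)}$ and not a linear-fractional transform of it; both points rest on the $\sM_\kappa^{-1}$ classification of \eqref{e-19-kappa} together with the unitary invariance of the model triple.
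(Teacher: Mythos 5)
Your proposal is correct and follows essentially the same route as the paper: the paper likewise specializes the $U=1$ formulas \eqref{e-17-1}--\eqref{e-32-real} of Section~\ref{s4} at $\kappa=\frac{a-1}{1+a}$, assembles $\bA_{1a}=\RE\bA_{1a}+i\,\IM\bA_{1a}$ by the identical rank-one algebra to confirm \eqref{e-75-k} and \eqref{e-76-k}, and then handles the impedance identification and the ``Moreover'' clause by repeating the model-triple and unitary-invariance argument of Theorem~\ref{t-6}. Your additional appeal to Proposition~\ref{prop2} to justify the $U=1$ branch is a harmless elaboration of what the paper encodes in \eqref{e-45-kappa-2-new} and the hypothesis $g_++g_-\in\dom(A)$.
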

  \begin{proof}
  We follow the steps of the proof of Theorem \ref{t-6} and hence use formulas \eqref{e-17-1} -- \eqref{e-32-real} with parameters $\kappa$ represented in the form \eqref{e-45-kappa-2-new} and $U=1$. We get
  \begin{equation}\label{e-77-A}
    \begin{aligned}
    \IM\bA_{1a}&=(\cdot,\chi_{1a})\chi_{1a}={\frac{\sqrt{a}}{\sqrt2}}(\cdot,\varphi+ \psi)\chi_{1a},\\ \RE\bA_{1a}&=\dA^*-\frac{i}{2}(\cdot,\varphi-\psi)(\varphi+ \psi)=\dA^*-\frac{i}{\sqrt{2a}}(\cdot,\varphi-\psi)\chi_{1a},
       \end{aligned}
\end{equation}
and $\chi_{1a}$ is defined by \eqref{e-76-k}. Then
$$
\begin{aligned}
\bA_{1a}&=\RE\bA_{1a}+i\IM\bA_{1a}=\dA^*-\frac{i}{\sqrt{2a}}\left[(\cdot,\varphi-\psi)-a(\cdot,\varphi+\psi)\right]\chi_{1a}\\
&=\dA^*-\frac{i}{\sqrt{2a}}\left[(\cdot,\varphi-\psi)-a(\cdot,\varphi+\psi)\right]\frac{\sqrt{a}}{\sqrt2}\,(\varphi+ \psi)\\
&=\dA^*-\frac{i}{2}\left(\cdot,\varphi-a\varphi-\psi-a\psi\right)(\varphi+\psi)\\
&=\dA^*-\frac{i}{2}\left(\cdot,(1-a)\varphi-(1+a)\psi\right)(\varphi+\psi).
     \end{aligned}
$$
This confirms \eqref{e-75-k}.

In order to prove the second part of the theorem statement we simply repeat the argument used in the proof of Theorem \ref{t-6} with  $\kappa=\frac{a-1}{1+a}$ and $U=1$.
  \end{proof}

\section{c-Entropy and  dissipation coefficient }\label{s6}

We begin with reminding a definition of the c-entropy of an L-system introduced in \cite{BT-16}.
\begin{definition}\label{d-9}
Let $\Theta$ be an L-system of the form \eqref{e6-3-2}. The quantity
\begin{equation}\label{e-80-entropy-def}
    \calS=-\ln (|W_\Theta(-i)|),
\end{equation}
where $W_\Theta(z)$ is the transfer function of $\Theta$, is called the \textbf{coupling entropy} (or \textbf{c-entropy}) of the L-system $\Theta$.
\end{definition}

Notice that if, in addition,  the point $z=i$ belongs to $\rho(T)$, then we also have that
\begin{equation}\label{e-80-entropy}
     \calS=\ln (|W_\Theta(i)|)=\ln (1/|\kappa|)=-\ln(|\kappa|).
\end{equation}
This follows from the known (see \cite{ABT}) property of the transfer functions for L-systems  that states that $W_\Theta(z)\overline{W_\Theta(\bar z)}=1$ and  the fact that $|W_\Theta(i)|=1/|\kappa|$ (see \cite{BMkT}).

\begin{definition}\label{d-10}
Let $T$ be the main operator of an L-system $\Theta$  of the form \eqref{e6-3-2} and $\kappa$ be its von Neumman's parameter according to a fixed  $(\cdot)$-normalized deficiency basis $g'_\pm$ such that $0\le\kappa\le1$. If \begin{equation}\label{e-76-ty}
\ti y=g'_+-\kappa g'_-,
\end{equation}
then the quantity $\calD= \IM (T \ti y,\ti y)$ is called the \textbf{coefficient of dissipation}  of the L-system $\Theta$.
\end{definition}


Our next immediate goal is to establish the connection between the coefficient of dissipation $\calD$ and c-entropy of an L-system $\Theta$.
\begin{lemma}\label{l-11}
Let $\Theta$ be an L-system of the form \eqref{e6-3-2}.

Then   c-entropy $\calS$ and the coefficient of dissipation $\calD$ of the system are related as
\begin{equation}\label{e-69-ent-dis}
\calD=1-e^{-2\cS}.
\end{equation}
\end{lemma}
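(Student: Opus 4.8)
The plan is to compute the two quantities $\calD$ and $\calS$ separately in terms of the von Neumann parameter $\kappa$ of the main operator $T$ and then eliminate $\kappa$. One side is already available: by \eqref{e-80-entropy} we have $\calS=-\ln(|\kappa|)$, hence $|\kappa|=e^{-\calS}$, and since the basis $g'_\pm$ in Definition \ref{d-10} is chosen so that $\kappa$ is real with $0\le\kappa\le1$, this gives $\kappa^2=e^{-2\calS}$. Therefore it suffices to prove that the coefficient of dissipation equals $\calD=1-\kappa^2$.

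To evaluate $\calD=\IM(T\ti y,\ti y)$ with $\ti y=g'_+-\kappa g'_-$, I would first note that $\ti y\in\dom(T)$ by Definition \ref{d-10} (this is precisely what it means for $\kappa$ to be the von Neumann parameter of $T$ relative to $g'_\pm$), and that $T$ is quasi-self-adjoint, so $T\subset\dA^*$ and $T\ti y=\dA^*\ti y$. Since $g'_+\in\sN_{+i}=\Ker(\dA^*-iI)$ and $g'_-\in\sN_{-i}=\Ker(\dA^*+iI)$, one has $\dA^* g'_+=i\,g'_+$ and $\dA^* g'_-=-i\,g'_-$, so that
\begin{equation*}
T\ti y=\dA^*(g'_+-\kappa g'_-)=i\,(g'_++\kappa g'_-).
\end{equation*}

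Next I would expand the inner product $(T\ti y,\ti y)$. Using $\|g'_\pm\|=1$, the reality of $\kappa$, and abbreviating $c=(g'_+,g'_-)$, a direct computation gives
\begin{equation*}
(T\ti y,\ti y)=i\,(g'_++\kappa g'_-,\,g'_+-\kappa g'_-)=i\big[(1-\kappa^2)-2i\kappa\,\IM c\big]=i(1-\kappa^2)+2\kappa\,\IM c.
\end{equation*}
The key observation is that the cross terms carrying the (generally nonzero) overlap $(g'_+,g'_-)$ assemble into $\kappa(\overline{c}-c)=-2i\kappa\,\IM c$, which after multiplication by $i$ becomes a purely real contribution; hence, independently of the angle between the deficiency vectors, the imaginary part is clean and $\calD=\IM(T\ti y,\ti y)=1-\kappa^2$. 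Combining this with $\kappa^2=e^{-2\calS}$ from the first step yields \eqref{e-69-ent-dis}. The only steps requiring care are this cancellation of the overlap terms into the real part, and the mild point that the closed-form \eqref{e-80-entropy} for $\calS$ presupposes $i\in\rho(T)$; otherwise the proof is a short, self-contained computation.
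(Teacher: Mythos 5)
Your evaluation of the dissipation coefficient is exactly the paper's argument: the same expansion of $(T\ti y,\ti y)=i(g_+'+\kappa g_-',g_+'-\kappa g_-')$, the same observation that the overlap terms $\kappa\bigl[(g_-',g_+')-(g_+',g_-')\bigr]$ produce, after multiplication by $i$, a purely real contribution, and hence the same conclusion $\calD=1-\kappa^2$. The one place you diverge --- and the caveat you flag yourself but do not close --- is the link between $\calS$ and $\kappa$: you invoke \eqref{e-80-entropy}, which the paper states only under the additional hypothesis $i\in\rho(T)$, a hypothesis the lemma does not make. The paper sidesteps this entirely: it quotes from \cite{BMkT-2} the identity $|W_\Theta(-i)|=|\kappa|$ and applies it directly to the definition \eqref{e-80-entropy-def}, so that $\calS=-\ln(|W_\Theta(-i)|)=-\ln(|\kappa|)$ with no condition beyond those already built into \eqref{e6-3-2}; note that $-i\in\rho(T)$ is automatic because $T$ is dissipative, so its spectrum lies in the closed upper half-plane. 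Replacing your first step by this one-line argument removes the only conditional element of your proof and makes it coincide with the paper's.
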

 \begin{proof}
Let $T$ be the main operator of $\Theta$. Since
$$
\ti y=g_+'-\kappa g_-'\in \dom(T)
$$
and $T\subset \dot A^*$, where $\dot A$ is the symmetric operator associated with the system $\Theta$, we have
\begin{align*}
(T\ti y, \ti y)&=(T(g_+'-\kappa g_-'),g_+'-\kappa g_-')=i(g_+'+\kappa g_-',g_+'-\kappa g_-')
\\&=i(\|g_+'\|^2-\kappa^2\|g_-'\|^2)+i\kappa[(g_-',g_+')-(g_+',g_-')]\\
&=i(1-\kappa^2)+i\kappa[(g_-',g_+')-\overline{(g_-',g_+')}].
\end{align*}
Here we have taken into account the normalization condition $\|g_\pm'\|=1$. Note that the second term in the last sum is a real number. Therefore,
\begin{equation}\label{dd}
\cD=\Im (T\ti y, \ti y)=1-\kappa^2.
\end{equation}

Since $|W_\Theta(-i)|=|\kappa|$ (see  \cite{BMkT-2}),
\begin{equation}\label{e-70-entropy}
    \calS=-\ln (|W_\Theta(-i)|)=-\ln(|\kappa|).
\end{equation}

Combining \eqref{dd} and and  \eqref{e-70-entropy} we obtain  the connection between the L-system's  c-entropy and its coefficient of dissipation given by \eqref{e-69-ent-dis}.

\end{proof}

\begin{remark}\label{r-12}
Note that c-entropy defined by formula \eqref{e-70-entropy}  is an additive function with respect to the coupling of L-systems (see \cite{BMkT-2}). That means if L-system $\Theta$ with c-entropy $\calS$ is a coupling of two L-systems $\Theta_1$ and $\Theta_2$ with c-entropies $\calS_1$ and $\calS_2$, respectively, we have
$$
\calS=\calS_1+\calS_2.
$$
Let $\calD$, $\calD_1$, and $\calD_2$ be the dissipation coefficients of L-systems $\Theta$, $\Theta_1$, and $\Theta_2$. Then Lemma \ref{l-11} and \eqref{e-69-ent-dis} imply
$$
1-\calD=e^{-2\cS}=e^{-2(\cS_1+\cS_2)}=e^{-2\cS_1}\cdot e^{-2\cS_2}=(1-\calD_1)(1-\calD_2).
$$
Thus, the formula
\begin{equation}\label{e-72-coupling}
    \calD=1-(1-\calD_1)(1-\calD_2)=\calD_1+\calD_2-\calD_1\calD_2,
\end{equation}
describes the coefficient of dissipation of the L-system coupling.
\end{remark}

Now we are going to find c-entropy for the L-systems whose impedances are the functions $M_{(\dot A, A)}(z)$ and  $aM_{(\dot A, A)}(z)$ with $a>0$.
\begin{theorem}\label{t-12}%

Let $\dA$ be a  symmetric densely defined closed operator  with deficiency indices $(1, 1)$ and  $(+)$-normalized deficiency vectors $g_+$ and $g_-$.

Assume, in addition,  that $A$ is a self-adjoint extensions of $A$
such that
$$g_+- g_-\in \dom (A).$$
Let  $M_{(\dA,A)}(z)$ denote the  Weyl-Titchmarsh function of the pair $(\dA,A)$.

 Then the function $M_{(\dA,A)}(z)$ can be represented as the impedance function of  an   L-system $\Theta_{10}$  of the form \eqref{e-63-1-0} with c-entropy
$\cS$  and   the coefficient of dissipation of the main operator of  the system $\Theta_{10}$ given by
\begin{equation}\label{e-85-ent}
\calS=\infty
\end{equation}
and \begin{equation}\label{e-86-dis1}
\calD=1.
\end{equation}

Moreover,  if $\Theta$ is  an L-system \eqref{e6-3-2} with the symmetric operator $\dA$ such that
$$V_{\Theta}(z)=M_{(\dA,A)}(z),\quad z\in \bbC_+,$$
where $A=\hat A$ is the quasi-kernel of $\RE\bA$,
then  $\Theta$ has infinite c-entropy and the coefficient of dissipation of the main operator is $1$.
\end{theorem}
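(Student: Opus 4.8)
The plan is to deduce the statement directly from Theorem \ref{t-6} and the computations carried out in the proof of Lemma \ref{l-11}, so that no new estimate is required beyond specializing $\kappa$ to the value $0$. First I would invoke Theorem \ref{t-6}: under the present hypotheses (in particular $g_+-g_-\in\dom(A)$), the Weyl--Titchmarsh function $M_{(\dA,A)}(z)$ is precisely the impedance function of the uniquely determined L-system $\Theta_{10}$ of the form \eqref{e-63-1-0}, and the von Neumann parameter of its main operator equals $\kappa=0$ (with $U=-1$). This already furnishes the realizing L-system and pins down the single number on which both target quantities will turn out to depend.

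Next I would evaluate the two invariants. For the c-entropy, formula \eqref{e-70-entropy} reads $\cS=-\ln(|\kappa|)$, so setting $\kappa=0$ gives $\cS=-\ln 0=+\infty$, which is \eqref{e-85-ent}. For the coefficient of dissipation I would use identity \eqref{dd} established inside the proof of Lemma \ref{l-11}, namely $\calD=1-\kappa^2$; with $\kappa=0$ this forces $\calD=1$, which is \eqref{e-86-dis1}. As a consistency check one may instead feed $\cS=\infty$ into \eqref{e-69-ent-dis} to get $\calD=1-e^{-2\cS}=1-e^{-\infty}=1$, recovering the same value. For the concluding \emph{moreover} clause I would appeal to the second (uniqueness) part of Theorem \ref{t-6}: any L-system $\Theta$ of the form \eqref{e6-3-2} whose symmetric operator is $\dA$, whose impedance function equals $M_{(\dA,A)}(z)$, and whose quasi-kernel of $\RE\bA$ coincides with $A$ necessarily has von Neumann parameters $\kappa=0$ and $U=-1$. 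Since both $\cS$ and $\calD$ depend on $\Theta$ only through $\kappa$ via \eqref{e-70-entropy} and \eqref{dd}, the same two conclusions $\cS=\infty$ and $\calD=1$ follow verbatim.

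I do not expect a genuine obstacle here, since the argument is purely a specialization of already-proved results; the one point deserving a word of care is that the coefficient of dissipation in Definition \ref{d-10} is formulated through the $(\cdot)$-normalized deficiency basis $g_\pm'$, whereas Theorem \ref{t-6} is stated for $(+)$-normalized vectors $g_\pm$. However, the von Neumann parameter $\kappa$ is the same number under either normalization, and at the value $\kappa=0$ the formula $\calD=1-\kappa^2$ is entirely insensitive to the choice of basis, so the two normalizations agree and the proof goes through without modification.
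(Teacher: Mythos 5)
Your proposal is correct and follows essentially the same route as the paper's own proof: invoke Theorem \ref{t-6} to conclude that $\kappa=0$ (and $U=-1$) for $\Theta_{10}$ and for any L-system realizing $M_{(\dA,A)}(z)$ as in the \emph{moreover} clause, then read off $\calS=-\ln|\kappa|=\infty$ and $\calD=1-\kappa^2=1$ from the c-entropy formula and identity \eqref{dd}. Your added remark on the $(+)$- versus $(\cdot)$-normalization is a sound observation (both bases differ by the same scalar factor $\sqrt2$, so $\kappa$ is unchanged), but it is not a departure from the paper's argument.
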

\begin{proof}
Applying Theorem \ref{t-6} yields that one of the defining $\Theta_{10}$ parameters $\kappa=0$. The proof then immediately follows from \eqref{e-80-entropy} and \eqref{dd}.
\end{proof}
Clearly, a similar to Theorem \ref{t-12} result takes place if the Weyl-Titchmarsh function $M_{(\dA,A)}(z)$ in the statement of the theorem is replaced with $M_{(\dA,A_\alpha)}(z)$. In this case one would apply Theorem \ref{t-8}.

Now we state and prove analogues results for the functions $aM_{(\dA,A)}(z)$ with $a>0$, $a\ne1$.
\begin{theorem}\label{t-14}%
Let $\dA$ be a  symmetric densely defined closed operator  with deficiency indices $(1, 1)$ and  $(+)$-normalized deficiency vectors $g_+$ and $g_-$.

Assume, in addition,  that $A$ is a self-adjoint extensions of $A$
such that
$$g_+- g_-\in \dom (A).$$
Let  $M_{(\dA,A)}(z)$ denote the  Weyl-Titchmarsh function of the pair $(\dA,A)$.

Then for any $a\in (0,1)$, the function $aM_{(\dA,A)}(z)$ can be represented as the impedance function of  an   L-system $\Theta_{1a}$  of the form \eqref{e-70-k} with c-entropy
$\cS$  and   the coefficient of dissipation of the main operator of the system $\Theta_{1a}$ given by
\begin{equation}\label{e-74-ent}
\calS=\ln(1+a)-\ln(1-a)
\end{equation}
and \begin{equation}\label{e-86-dis}
\calD=\frac{4a}{(1+a)^2},
\end{equation}
respectively.

Moreover,  if $\Theta$ is  an L-system \eqref{e6-3-2} with the symmetric operator $\dA$ such that
$$V_{\Theta}(z)=aM_{(\dA,A)}(z),\quad z\in \bbC_+,$$
where $A=\hat A$ is the quasi-kernel of $\RE\bA$, then $\Theta$ has c-entropy $\calS$ and the coefficient of dissipation of the main operator $\calD$ defined by \eqref{e-74-ent} and \eqref{e-86-dis}, respectively.
\end{theorem}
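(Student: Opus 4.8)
The plan is to reduce the entire statement to the single von Neumann parameter $\kappa$ and then feed it into the entropy--dissipation machinery already established. First I would invoke Theorem~\ref{t-9}: for $a\in(0,1)$ it identifies $aM_{(\dA,A)}(z)$ as precisely the impedance function of the L-system $\Theta_{1a}$ of the form \eqref{e-70-k}, and, crucially, it records that the von Neumann parameter of the main operator $T_{1a}$ is $\kappa=\frac{1-a}{1+a}$, with quasi-kernel parameter $U=-1$. This is the only structural input required; everything that follows is a computation with this one value of $\kappa$. I would note at the outset that $a\in(0,1)$ forces $\kappa=\frac{1-a}{1+a}\in(0,1)$, so the basis is legitimately $(\cdot)$-normalized with $0\le\kappa\le1$, which is the hypothesis under which the dissipation formula \eqref{dd} was derived.

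Next I would compute the two quantities. For the c-entropy, formula \eqref{e-70-entropy} gives $\calS=-\ln|\kappa|$, and since $\kappa$ is positive the absolute value is immaterial, so
$$\calS=-\ln\frac{1-a}{1+a}=\ln(1+a)-\ln(1-a),$$
which is \eqref{e-74-ent}. For the coefficient of dissipation I would use \eqref{dd} in the form $\calD=1-\kappa^2$; substituting the same $\kappa$ and simplifying $(1+a)^2-(1-a)^2=4a$ yields
$$\calD=1-\left(\frac{1-a}{1+a}\right)^2=\frac{4a}{(1+a)^2},$$
which is \eqref{e-86-dis}. As a consistency check one may instead derive $\calD$ from $\calS$ directly through the relation $\calD=1-e^{-2\calS}$ of Lemma~\ref{l-11}, obtaining the same expression.

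For the ``Moreover'' assertion the argument is identical once I appeal to the uniqueness statement in the second part of Theorem~\ref{t-9}: any L-system $\Theta$ whose impedance equals $aM_{(\dA,A)}(z)$ on $\bbC_+$ and whose quasi-kernel $\hat A$ coincides with $A$ must carry the same von Neumann parameters $\kappa=\frac{1-a}{1+a}$ and $U=-1$. Since $\calS$ and $\calD$ depend on $\Theta$ only through $\kappa$, the two formulas transfer verbatim. I do not expect a genuine obstacle here, as the theorem's content is entirely packaged in Theorem~\ref{t-9} and Lemma~\ref{l-11}; the only point deserving a line of care is the normalization convention just mentioned, ensuring that dropping the absolute value in the entropy formula and applying \eqref{dd} are both justified.
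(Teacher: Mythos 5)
Your proposal is correct and follows essentially the same route as the paper: invoke Theorem \ref{t-9} to obtain the representation together with $\kappa=\frac{1-a}{1+a}$ and $U=-1$, then substitute into the entropy formula $\calS=-\ln|\kappa|$ and the dissipation formula $\calD=1-\kappa^2$ from \eqref{dd}. Your added remarks (the sign/normalization check, the consistency check via Lemma \ref{l-11}, and the explicit appeal to the uniqueness clause of Theorem \ref{t-9} for the ``Moreover'' part) are sound elaborations of steps the paper leaves implicit.
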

\begin{proof}
Applying Theorem \ref{t-9} yields that one of the defining $\Theta_{1a}$ parameters $\kappa=\frac{1-a}{1+a}$. Substituting this value in \eqref{e-80-entropy} gives us \eqref{e-74-ent}. Using \eqref{dd} we get
$$
\calD=1-\kappa^2=1-\frac{(1-a)^2}{(1+a)^2}=\frac{4a}{(1+a)^2},
$$
that confirms \eqref{e-86-dis}.
\end{proof}
A similar result takes place for $a>1$.
\begin{theorem}\label{t-15}%
Let $\dA$ be a  symmetric densely defined closed operator  with deficiency indices $(1, 1)$ and  $(+)$-normalized deficiency vectors $g_+$ and $g_-$.

Assume, in addition,  that $A$ is a self-adjoint extensions of $A$
such that
$$g_++ g_-\in \dom (A).$$
Let  $M_{(\dA,A)}(z)$ denote the  Weyl-Titchmarsh function of the pair $(\dA,A)$.

Then for any $a>1$ the function $aM_{(\dA,A)}(z)$ can be represented as the impedance function of  an   L-system $\Theta_{1a}$  of the form \eqref{e-74-k} with c-entropy $\cS$  and the   coefficient of dissipation  $\calD$ of the main operator of the system $\Theta_{1a}$ given by
\begin{equation}\label{e-88-ent}
\calS=\ln(a+1)-\ln(a-1)
\end{equation}
and
\begin{equation}\label{cald}
\calD=\frac{4a}{(1+a)^2},
\end{equation}
respectfully.

Moreover,  if $\Theta$ is  an L-system \eqref{e6-3-2} with the symmetric operator $\dA$ such that
$$V_{\Theta}(z)=aM_{(\dA,A)}(z),\quad z\in \bbC_+,$$
where $A=\hat A$ is the quasi-kernel of  $\RE\bA$,
then $\Theta$ has c-entropy $\calS$ and the coefficient of dissipation of the main operator $\calD$ given by \eqref{e-88-ent} and \eqref{e-86-dis}, respectively.
\end{theorem}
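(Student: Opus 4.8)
The plan is to mirror the proof of Theorem \ref{t-14} line for line, with the single change that the standing hypothesis here is the ``anti-Hypothesis'' $g_++g_-\in\dom(A)$ (the case $U=1$) rather than $g_+-g_-\in\dom(A)$, so I would invoke Theorem \ref{t-10} in place of Theorem \ref{t-9}. First I would apply Theorem \ref{t-10} to produce the explicit realizing L-system $\Theta_{1a}$ of the form \eqref{e-74-k} and, more importantly, to read off that its main operator has von Neumann parameter $\kappa=\frac{a-1}{1+a}$ while the quasi-kernel has $U=1$. Since $a>1$, this $\kappa$ is positive and lies in $[0,1)$, so the normalization $0\le\kappa\le1$ demanded in Definition \ref{d-10} is automatically in force and $|\kappa|=\kappa$.

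The two claimed quantities then drop out of the two ``master formulas'' proved earlier. For the c-entropy I would use $\calS=-\ln|\kappa|$, valid by Lemma \ref{l-11} (equivalently \eqref{e-80-entropy}); substituting $\kappa=\frac{a-1}{a+1}$ gives $\calS=\ln(a+1)-\ln(a-1)$, which is \eqref{e-88-ent}. For the dissipation coefficient I would use \eqref{dd}, namely $\calD=1-\kappa^2$, together with the elementary identity $(a+1)^2-(a-1)^2=4a$, to obtain $\calD=\frac{4a}{(1+a)^2}$, which is \eqref{cald}. There is no genuine obstacle in this half: once $\kappa$ is identified, both identities are one-line specializations, and the only point worth a moment's care is that $\kappa>0$, so that the absolute value in $\calS=-\ln|\kappa|$ may be dropped cleanly.

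For the ``moreover'' assertion I would appeal to the uniqueness part of Theorem \ref{t-10}: every L-system $\Theta$ with symmetric operator $\dA$, quasi-kernel $A=\hat A$, and impedance $V_\Theta(z)=aM_{(\dA,A)}(z)$ necessarily shares the same von Neumann parameters $\kappa=\frac{a-1}{1+a}$ and $U=1$. Because both $\calS$ and $\calD$ are functions of $\kappa$ alone---through $\calS=-\ln|\kappa|$ and $\calD=1-\kappa^2$---the values \eqref{e-88-ent} and \eqref{e-86-dis} transfer verbatim to every such $\Theta$. The one substantive ingredient here is the fact, supplied by Lemma \ref{l-11} and \eqref{dd}, that these invariants are determined entirely by $|W_\Theta(-i)|=|\kappa|$ and hence are insensitive to the particular realization; this is what makes the invariance argument go through.
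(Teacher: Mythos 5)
Your proposal is correct and follows essentially the same route as the paper: invoke Theorem \ref{t-10} to identify $\kappa=\frac{a-1}{1+a}$ (and $U=1$), then substitute into \eqref{e-80-entropy} for the c-entropy and into \eqref{dd} for $\calD=1-\kappa^2=\frac{4a}{(1+a)^2}$. Your explicit handling of the ``moreover'' clause via the uniqueness part of Theorem \ref{t-10} is exactly what the paper's terse proof implicitly relies on, so there is no substantive difference.
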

\begin{proof}
Applying Theorem \ref{t-10} yields that one of the defining $\Theta_{1a}$ parameters $\kappa=\frac{a-1}{1+a}$. Substituting this value in \eqref{e-80-entropy} gives us \eqref{e-88-ent}. Using \eqref{dd} we get
$$
\calD=1-\kappa^2=1-\frac{(a-1)^2}{(1+a)^2}=\frac{4a}{(1+a)^2},
$$
that confirms \eqref{cald}.
\end{proof}

 {Notice  that c-entropy $\cS(a)$  given by \eqref{e-74-ent} and \eqref{e-88-ent}
admits the representation
$$
\cS(a)=\ln \Big | \frac{a+1}{a-1}\Big |,\quad a\in (0,1)\cup(1,\infty).
$$
It is clearly seen that $\cS(a)$ coincides   with the boundary values on the positive semi-axis of the real part of the analytic function $\sS(a)$ in the upper half-plane
of the complex parameter $a$ given by
$$
\sS(z)=\ln \frac{a+1}{a-1}=\ln(a+1)-\ln (a-1),
\quad a\in \bbC_+.
$$
Here $\ln(z)$  denotes the principle branch of the logarithmic function that is real on the positive real-axis. That is,
$$\cS(a)=\text{Re} (\sG(a+i0)), \quad a\in (0,1)\cap (1,\infty).$$
The point $a=1$ is a branching point of $\sS(a)$: c-entropy
is infinite in this case (see eq. \eqref{e-85-ent}).
}

 {It is interesting to note that the corresponding one-form $d\sS(a)$ is an Abelian differential of the third kind with simple poles at $a=\pm 1$ with
$$\text{Res} (d\sS)|_{a=\pm 1}=\mp1$$ on the Riemann surface $\mathfrak{R}=\bbC\cup\{\infty\}$.
$$
$$
Observe that  the differential  $d\sS(a)$ is invariant with respect to  the involution of the Riemann surface $\mathfrak{R}$ given by
$$
a\mapsto \frac1a, \quad a\in \mathfrak{R}=\bbC\cup\{\infty\}.
$$
In particular,
$$
\cS(a)=\cS\left (\frac1a\right ),  \quad a\in (0,1)\cap (1,\infty).
$$
and hence, by Lemma \ref{l-11},
$$
\cD(a)=\cD\left (\frac1a\right ), \quad a\in (0,\infty).
$$
One cam also see that directly. Indeed, since
$$\cD(a)=\frac{4a}{(1+a)^2}\quad \text{for all}\quad a\in(0,\infty)
$$
(see   \eqref{e-86-dis1}, \eqref{e-86-dis}  and \eqref{cald}),
one computes
$$
\cD(a)=\frac{4a}{(1+a)^2}=\frac{\frac4a}{(1+\frac1a)^2}=\cD\left (\frac1a\right ),\quad a\in (0,\infty).
$$
}


A natural question arises then: can different values $0<a_1<1$ and $a_2>1$ give the same dissipation coefficient and under what condition? To answer it we set
$$
\calD(a_1)=\frac{4a_1}{(1+a_1)^2}=\calD(a_2)=\frac{4a_2}{(1+a_2)^2},\quad 0<a_1<1<a_2,
$$
to get
$$
a_1(1+a_2)^2=a_2(1+a_1)^2.
$$

Expanding and simplifying the above yields
$$
(a_1-a_2)(1-a_1a_2)=0,\quad 0<a_1<1<a_2.
$$
Therefore the only possibility for the dissipation coefficient to stay the same is when $a_2=1/a_1$.
\begin{table}[ht]
\centering
\begin{tabular}{|c|c|c|c|}
\hline
 &  &  &\\
 \textbf{Function}& \textbf{c-Entropy}  & \textbf{Dissipation}  & \textbf{Theorem}  \\
  &  & \textbf{coefficient} &\\ \hline
&  &  &\\
  $M_{(\dA,A)}(z)$ & $\infty$ & $1$ &Theorem \ref{t-12}\\
  &  &  &\\  \hline
 &  &  &\\
 $aM_{(\dA,A)}(z)$&  $\calS=\ln(1+a)-\ln(1-a)$& $\calD=\frac{4a}{(1+a)^2}$ &Theorem \ref{t-14}\\
 &  &  &\\
   $0<a<1$&  & &\\
  \hline
 &  &  &\\
 $aM_{(\dA,A)}(z)$  &  $\calS=\ln(a+1)-\ln(a-1)$& $\calD=\frac{4a}{(1+a)^2}$ &Theorem \ref{t-15}\\
  &  &  &\\
$a>1$   &  & & \\
     \hline
     \multicolumn{1}{l}{} & \multicolumn{1}{l}{} & \multicolumn{1}{l}{} & \multicolumn{1}{l}{}
\end{tabular}
\caption{c-Entropy and Dissipation coefficient}
\label{Table-1}
\end{table}

The results of Section \ref{s6} are summarized in  Table \ref{Table-1} where each row shows a function to be represented  as well as the values of c-Entropy and dissipation coefficient of the   representing  L-system. The last column of Table \ref{Table-1} references theorems stating the corresponding result.

\section{Examples}

\subsection*{Example 1}\label{ex-1}
In this example we will apply Theorem \ref{t-6} to present an L-system that has the impedance function matching a   given function $M_{(\dot A, A)}(z)$.

Following \cite{AG93} we consider the prime symmetric operator for $\ell>0$
\begin{equation}\label{e-56}
    \begin{aligned}
\dA x&=i\frac{dx}{dt},\;
\dom(\dA)=\left\{x(t)\,\Big|\,x(t) -\text{abs. cont.}, x'(t)\in L^2_{[0,\ell]},\, x(0)=x(\ell)=0\right\}.\\
    \end{aligned}
\end{equation}
Its  (normalized) deficiency vectors of $\dA$ are
\begin{equation}\label{e-57}
g_+=\frac{\sqrt2}{\sqrt{e^{2\ell}-1}}e^t\in \sN_i,\qquad g_-=\frac{\sqrt2}{\sqrt{1-e^{-2\ell}}}e^{-t}\in \sN_{-i}.
\end{equation}
If we set $C=\frac{\sqrt2}{\sqrt{e^{2\ell}-1}}$, then \eqref{e-57} can be re-written as
$$
g_+=Ce^t,\quad g_-=Ce^\ell e^{-t}.
$$
Let
\begin{equation}\label{e-58'}
    \begin{aligned}
A x&=i\frac{dx}{dt},\;
\dom(A)=\left\{x(t)\,\Big|\,x(t) -\text{abs. cont.}, x'(t)\in L^2_{[0,\ell]},\, x(0)=-x(\ell)\right\}.\\
    \end{aligned}
\end{equation}
be a self-adjoint extension of $\dA$. Clearly, $g_+(0)-g_-(0)=C-Ce^\ell$ and $g_+(\ell)-g_-(\ell)=Ce^\ell-C$ and hence the first part of \eqref{ddoomm14} is satisfied, i.e., $g_+-g_-\in\dom(A)$.
 Then the Liv\v{s}ic  function $s(z)$ in \eqref{charsum} for the pair $(\dA,A)$ has the form  (see \cite{AG93})
\begin{equation}\label{e1-1}
s(z)=\frac{e^\ell-e^{-i\ell z}}{1-e^\ell e^{-i\ell z}}.
\end{equation}
Applying \eqref{e-12-M-s} we get
\begin{equation}\label{e-88-ex}
    M_{(\dot A, A)}(z)=i\frac{e^\ell+1}{e^{\ell}-1}\cdot\frac{e^{-i\ell z}-1}{e^{-i\ell z}+1}.
\end{equation}
We introduce the operator
\begin{equation}\label{e1-3'}
T_0 x=i\frac{dx}{dt},\;
\end{equation}
$$
\dom(T_0)=\left\{x(t)\,\Big|\,x(t)-\text{abs. cont.}, x'(t)\in L^2_{[0,\ell]},\,  x(\ell)=e^{\ell}x(0)\right\}.
$$
It turns out that  $T_0$ is a dissipative extension of $\dA$ parameterized by the von Neumann parameter $\kappa=0$. Indeed, using \eqref{e-57} with \eqref{parpar} again we obtain
\begin{equation}\label{e-70}
x(t)=Ce^{t}-\kappa\, Ce^\ell e^{-t}\in\dom(T_0),\quad x(\ell)=e^{\ell}x(0),
\end{equation}
yielding $\kappa=0$. 
We are going to use the triple $(\dA, T_0, A)$ in the construction of an L-system $\Theta_0$. By the direct check one gets
\begin{equation}\label{e1-3star'}
T_0^* x=i\frac{dx}{dt},
\end{equation}
$$
\dom(T_0^*)=\left\{x(t)\,\Big|\,x(t)-\text{abs. cont.}, x'(t)\in L^2_{[0,\ell]},\,  x(\ell)=e^{-\ell}x(0)\right\}.
$$
Following  \cite[Example 1]{BMkT} we have
\begin{equation}\label{e7-83}
\begin{aligned}
\dA^\ast x&=i\frac{dx}{dt},\;
\dom(\dA^\ast)&=\left\{x(t)\,\Big|\,x(t)-\text{ abs. cont.}, x'(t)\in L^2_{[0,\ell]}\right\}.\\
\end{aligned}
\end{equation}
Then $\calH_+=\dom(\dA^\ast)=W^1_2$ is the Sobolev space with scalar
product
\begin{equation}\label{e-product}
(x,y)_+=\int_0^\ell x(t)\overline{y(t)}\,dt+\int_0^\ell
x'(t)\overline{y'(t)}\,dt.
\end{equation}
 Construct rigged Hilbert space
$W^1_2\subset L^2_{[0,\ell]}\subset (W_2^1)_-$.

In order to illustrate the results of Section \ref{s5} we need to $(+)$-normalize the deficiency vectors $g_\pm$ from \eqref{e-57}. Using Remark \ref{r-1} we obtain a new set
\begin{equation}\label{e-95}
g_+'=\frac{e^t}{\sqrt{e^{2\ell}-1}}\in \sN_i,\qquad g_-'=\frac{e^{-t}}{\sqrt{1-e^{-2\ell}}}\in \sN_{-i}.
\end{equation}
such that $\|g_\pm'\|_+=1$. Now relying on the fact that $(g_+',\varphi)=(g_-',\psi)=1$ and $(g_+',\psi)=(g_-',\varphi)=0$ we find
\begin{equation}\label{e-96}
    \varphi=\frac{1}{\sqrt{e^{2\ell}-1}}[e^\ell\delta(t-\ell)-\delta(t)]  \quad \textrm{and }\quad \psi=\frac{1}{\sqrt{e^{2\ell}-1}}[e^\ell\delta(t)-\delta(t-\ell)].
\end{equation}
Here $\delta(t)$, $\delta(t-\ell)$ are delta-functions and elements of $(W^1_2)_-$ that generate functionals by the formulas $(x,\delta(t))=x(0)$ and $(x,\delta(t-\ell))=x(\ell)$. For further convenience we find
\begin{equation}\label{e-94-phi-psi}
  \varphi-\psi=  \sqrt{\frac{e^\ell+1}{e^\ell-1}}\left[\delta(t-\ell)-\delta(t)\right].
\end{equation}
Applying Theorem \ref{t-6} and formula \eqref{e-20-10} with $\varphi$ and $\psi$ from \eqref{e-96}, we construct operator
$$
\begin{aligned}
\bA_{10} x&=\dA^*x-i(x,\psi)(\varphi-\psi)\\
&=i\frac{dx}{dt}-i\frac{1}{\sqrt{e^{2\ell}-1}}(e^\ell x(0)- x(\ell))\cdot\sqrt{\frac{e^\ell+1}{e^\ell-1}}\left[\delta(t-\ell)-\delta(t)\right]\\
&=i\frac{dx}{dt}+i \frac{x(\ell)-e^\ell x(0)}{e^\ell-1} \left[\delta(t-\ell)-\delta(t)\right].
\end{aligned}
$$
Finding also $\bA_{10}^\ast$ we obtain
\begin{equation}\label{e-73}
\begin{aligned}
\bA_{10} x&=i\frac{dx}{dt}+i \frac{x(\ell)-e^\ell x(0)}{e^\ell-1} \left[\delta(t-\ell)-\delta(t)\right],\\
\bA_{10}^\ast &x=i\frac{dx}{dt}+i \frac{x(0)-e^{\ell} x(\ell)}{e^\ell-1} \left[\delta(t-\ell)-\delta(t)\right],
\end{aligned}
\end{equation}
where $x(t)\in W_2^1$. It is easy to see that
$\bA_{10}\supset T_0\supset \dA$, $\bA_{10}^\ast\supset T_0^\ast\supset \dA,$
and (independently or via \eqref{e-19-10}) we get
$$
\RE\bA_{10} x=i\frac{dx}{dt}-\frac{i }{2}(x(0)+x(\ell))\left[\delta(t-\ell)-\delta(t)\right].
$$
Moreover, $\RE\bA_0$ has its quasi-kernel equal to $A$ in \eqref{e-58'}. Similarly,
$$
\IM\bA_{10} x=\left(\frac{1}{2}\right) \frac{e^\ell+1}{e^\ell-1} (x(\ell)-x(0))\left[\delta(t-\ell)-\delta(t)\right].
$$
Therefore,
$$
\begin{aligned}
\IM\bA_{10}&=\left(\cdot,\sqrt{\frac{e^\ell+1}{2(e^\ell-1)}}\,\left[\delta(t-\ell)-\delta(t)\right]\right)\sqrt{\frac{e^\ell+1}{2(e^\ell-1)}}\,\left[\delta(t-\ell)-\delta(t)\right]\\
&=(\cdot,\chi_{10})\chi_{10},
\end{aligned}
$$
where (see \eqref{e-18-10})
\begin{equation}\label{e-98-ex}
\chi_{10}=\sqrt{\frac{e^\ell+1}{2(e^\ell-1)}}\,[\delta(t-\ell)-\delta(t)]=\frac{1}{\sqrt2}\,(\varphi- \psi).
\end{equation}
Now we can build
\begin{equation*}
\Theta_{10}= 
\begin{pmatrix}
\bA_{10}&K_{10} &1\\
&&\\
W_2^1\subset L^2_{[0,l]}\subset (W^1_2)_- &{ } &\dC
\end{pmatrix},
\end{equation*}
which is a minimal  L-system of the form \eqref{e-63-1-0} described in Theorem \ref{t-6}. Here
$K_{10} c=c\cdot \chi_{10}$, $(c\in \dC)$, $K_{10}^\ast x=(x,\chi_{10})$ and $x(t)\in W^1_2$. It was shown in \cite[Example 2]{BMkT} the
transfer function of $\Theta_{10}$ is
$$
W_{\Theta_{10}}(z)=\frac{e^\ell e^{-i\ell z}-1}{e^\ell-e^{-i\ell z}}.
$$
The corresponding impedance function also independently calculated in \cite[Example 2]{BMkT} (or found by \eqref{e6-3-6}) is
$$
V_{\Theta_{10}}(z)=i\frac{e^\ell+1}{e^{\ell}-1}\cdot\frac{e^{-i\ell z}-1}{e^{-i\ell z}+1}
$$
that matches $M_{(\dot A, A)}(z)$ from \eqref{e-88-ex}. The c-entropy of the L-system $\Theta_{10}$ is infinite and the coefficient of dissipation of $T_0$ in \eqref{e1-3'} is $\calD=1$.

\subsection*{Example 2}\label{ex-2}
In this example we will apply Theorem \ref{t-9} to present an L-system with impedance mapping matching a  function $aM_{(\dot A, A)}(z)$, $(0<a<1)$. Then we will provide a similar construction  illustrating Theorem \ref{t-10}.

  We  rely on the main elements of the construction presented in Example 1 but with some changes. Let $\dA$ and $A$ be still defined by formulas \eqref{e-56} and \eqref{e-58'}, respectively and let $s(z)$ be  the Liv\v{s}ic characteristic function $s(z)$ for the pair $(\dA,A)$ given by \eqref{e1-1}. We introduce the operator
\begin{equation}\label{e1-3}
    \begin{aligned}
T x&=i\frac{dx}{dt},\;
\dom(T)=\left\{x(t)\,\Big|\,x(t)-\text{abs. cont.}, x'(t)\in L^2_{[0,\ell]},\,  x(0)=0\right\}.
    \end{aligned}
\end{equation}
By construction, $T$ is a dissipative extension of $\dA$ parameterized by the von Neumann parameter $\kappa$. To find $\kappa$ we use \eqref{e-57} with \eqref{parpar} to obtain
\begin{equation}\label{e1-4}
x(t)=Ce^{t}-\kappa\, Ce^\ell e^{-t}\in\dom(T),\quad x(0)=0,
\end{equation}
yielding
\begin{equation}\label{e-102}
\kappa=e^{-\ell}.
\end{equation}

Let
\begin{equation}\label{e-102-ex}
    a=\frac{1-\kappa}{1+\kappa}=\frac{1-e^{-\ell}}{1+e^{-\ell}}=\frac{e^{\ell}-1}{e^{\ell}+1}<1
\end{equation}
and introduce
\begin{equation}\label{e-103-ex}
V(z)=aM_{(\dA,A)}(z)=\frac{e^{\ell}-1}{e^{\ell}+1}M_{(\dA,A)}(z),
\end{equation}
where $M_{(\dA,A)}(z)$ is given by \eqref{e-88-ex}. Clearly, $V(z)=aM_{(\dA,A)}(z)\in \sM_\kappa$ with $\kappa=e^{-\ell}$. We are going to utilize Theorem \ref{t-9} to construct an L-system $\Theta_{1a}$ that represents $V(z)$ as its impedance function.

Following the steps of Example 1 we are going to use the triple $(\dA, T, A)$ in the construction of an L-system $\Theta$.
First, we note that by the direct check one gets
\begin{equation}\label{e1-3star}
    \begin{aligned}
T^* x&=i\frac{dx}{dt},\;
\dom(T)=\left\{x(t)\,\Big|\,x(t)-\text{abs. cont.}, x'(t)\in L^2_{[0,\ell]},\,  x(\ell)=0\right\}.
    \end{aligned}
\end{equation}
Let $W^1_2\subset L^2_{[0,\ell]}\subset (W_2^1)_-$ be the rigged Hilbert space explained in Example 1.
Applying Theorem \ref{t-9} and formula \eqref{e-71-k} with $\varphi$ and $\psi$ from \eqref{e-96}, we obtain operators
\begin{equation}\label{e7-84}
\bA_{1a} x=i\frac{dx}{dt}+i x(0)\left[\delta(t)-\delta(t-\ell)\right],\quad
\bA_{1a}^\ast x=i\frac{dx}{dt}+i x(l)\left[\delta(t)-\delta(t-\ell)\right],
\end{equation}
where $x(t)\in W_2^1$.  It is easy to see that
$\bA_{1a}\supset T\supset \dA$, $\bA_{1a}^\ast\supset T^\ast\supset \dA,$
and that
$$
\RE\bA_{1a} x=i\frac{dx}{dt}+\frac{i }{2}(x(0)+x(\ell))\left[\delta(t)-\delta(t-\ell)\right].
$$
Clearly, $\RE\bA$ has its quasi-kernel equal to $A$ in \eqref{e-58'}. Moreover,
$$
\IM\bA_{1a} =\left(\cdot,\frac{1}{\sqrt 2}[\delta(t)-\delta(t-\ell)]\right) \frac{1}{\sqrt 2}[\delta(t)-\delta(t-\ell)]=(\cdot,\chi_{1a})\chi_{1a},
$$
where $\chi_{1a}=\frac{1}{\sqrt 2}[\delta(t)-\delta(t-\ell)]$. Note that $\bA_{1a}$ represents a $(*)$-extension of operator $T$ with one-dimensional imaginary component in $L^2_{[0,\ell]}$ while the description of all such $(*)$-extensions is given in \cite{T67}.

Now we can build
\begin{equation}\label{e6-125}
\Theta_{1a}=
\begin{pmatrix}
\bA_{1a} &K_{1a} &1\\
&&\\
W_2^1\subset L^2_{[0,\ell]}\subset (W^1_2)_- &{ } &\dC
\end{pmatrix},
\end{equation}
that is a minimal L-system with
\begin{equation}\label{e7-62}
\begin{aligned}
K_{1a}c&=c\cdot \chi_{1a}=c\cdot \frac{1}{\sqrt 2}[\delta(t)-\delta(t-l)], \quad (c\in \dC),\\
K_{1a}^\ast x&=(x,\chi_{1a})=\left(x,  \frac{1}{\sqrt
2}[\delta(t)-\delta(t-l)]\right)=\frac{1}{\sqrt
2}[x(0)-x(l)],\\
\end{aligned}
\end{equation}
and $x(t)\in W^1_2$. It was shown in \cite[Example 1]{BMkT} that the
transfer function of $\Theta_{1a}$ is
$$
W_{\Theta_{1a}}(z)=e^{-i\ell z}.
$$
 The corresponding impedance function is found via \eqref{e6-3-6} and is
$$
V_{\Theta_{1a}}(z)=i\frac{e^{-i\ell z}-1}{e^{-i\ell z}+1}
$$
that matches function $V(z)=aM_{(\dot A, A)}(z)$ of the form \eqref{e-103-ex}. Also, one can confirm by direct substitution that
$$
V_{\Theta_{1a}}(i)=i\frac{e^{\ell}-1}{e^{\ell}+1}=ia.
$$
Using formulas \eqref{e-74-ent} and \eqref{e-86-dis} we find that the c-entropy of the L-system $\Theta_{1a}$ is
$$\calS=\ln\left(1+\frac{e^{\ell}-1}{e^{\ell}+1}\right)-\ln\left(1-\frac{e^{\ell}-1}{e^{\ell}+1}\right)=\ln(e^{\ell})=\ell$$
and the coefficient of dissipation of $T$ in \eqref{e1-3} can be found via \eqref{e-86-dis} as
$$\calD=\frac{4a}{(1+a)^2}=\frac{4\frac{e^{\ell}-1}{e^{\ell}+1}}{(1+\frac{e^{\ell}-1}{e^{\ell}+1})^2}=\frac{e^{2\ell}-1}{e^{2\ell}}=1-e^{-2\ell}.$$

We can quickly modify the developed construction to illustrate Theorem \ref{t-10} using the value
$$
\frac{1}{a}=\frac{e^{\ell}+1}{e^{\ell}-1}>1.
$$
Consider the function
\begin{equation}\label{e-105-ex}
\begin{aligned}
-V^{-1}(z)&=\frac{1}{a}M_{(\dA,A_{\frac{\pi}{2}})}(z)=\frac{e^{\ell}+1}{e^{\ell}-1}\cdot i\frac{e^\ell-1}{e^{\ell}+1}\cdot\frac{e^{-i\ell z}+1}{e^{-i\ell z}-1}\\
&=i\frac{e^{-i\ell z}+1}{e^{-i\ell z}-1},
\end{aligned}
\end{equation}
where $V(z)$ is still defined by \eqref{e-103-ex}. It is known \cite{ABT} (see also \cite{BMkT-2}, \cite{BMkT-3}) that $-V^{-1}(z)$ can be represented as the impedance function of an L-system $\Theta_{1\frac{1}{a}}$ that has the same main operator $T$ (of the form \eqref{e1-3}) as $\Theta_{1a}$.
Applying Theorem \ref{t-10} and formula \eqref{e-75-k} with $\varphi$ and $\psi$ from \eqref{e-96}, and noting that
\begin{equation}\label{e-107-phi+psi}
  \varphi+\psi=  \sqrt{\frac{e^\ell-1}{e^\ell+1}}\left[\delta(t-\ell)+\delta(t)\right],
\end{equation}
we obtain operators
\begin{equation}\label{e-ex2-106}
\bA_{1\frac{1}{a}} x=i\frac{dx}{dt}+ix(0)\left[\delta(t)+\delta(t-\ell)\right] ,\quad
\bA_{1\frac{1}{a}}^\ast x=i\frac{dx}{dt}-ix(l)\left[\delta(t)+\delta(t-\ell)\right],
\end{equation}
where $x(t)\in W_2^1$.  It is easy to see that
$\bA_{1\frac{1}{a}}\supset T\supset \dA$, $\bA_{1\frac{1}{a}}^\ast\supset T^\ast\supset \dA,$
and that
$$
\RE\bA_{1\frac{1}{a}} x=i\frac{dx}{dt}+\frac{i }{2}(x(0)-x(\ell))\left[\delta(t)+\delta(t-\ell)\right],
$$
with the quasi-kernel
$$
A_{\frac{\pi}{2}} x=i\frac{dx}{dt},\,
\dom(A_{\frac{\pi}{2}})=\left\{x(t)\,\Big|\,x(t) -\text{abs. cont.}, x'(t)\in L^2_{[0,\ell]},\, x(0)=x(\ell)\right\}.
$$
Moreover,
$$
\IM\bA_{1\frac{1}{a}} =\left(\cdot,\frac{1}{\sqrt 2}[\delta(t)+\delta(t-\ell)]\right) \frac{1}{\sqrt 2}[\delta(t)+\delta(t-\ell)]=(\cdot,\chi_{1\frac{1}{a}})\chi_{1\frac{1}{a}},
$$
where $\chi_{1\frac{1}{a}}=\frac{1}{\sqrt 2}[\delta(t)+\delta(t-\ell)]$.
Now we can construct
\begin{equation}\label{e-ex2-125}
\Theta_{1\frac{1}{a}}=
\begin{pmatrix}
\bA_{1\frac{1}{a}} &K_{1\frac{1}{a}} &1\\
&&\\
W_2^1\subset L^2_{[0,\ell]}\subset (W^1_2)_- &{ } &\dC
\end{pmatrix},
\end{equation}
that is a minimal L-system with
\begin{equation}\label{e-ex2-110}
\begin{aligned}
K_{1\frac{1}{a}}c&=c\cdot \chi_{1\frac{1}{a}}=c\cdot \frac{1}{\sqrt 2}[\delta(t)+\delta(t-l)], \quad (c\in \dC),\\
K_{1\frac{1}{a}}^\ast x&=(x,\chi_{1\frac{1}{a}})=\left(x,  \frac{1}{\sqrt
2}[\delta(t)+\delta(t-l)]\right)=\frac{1}{\sqrt2}[x(0)+x(l)],\\
\end{aligned}
\end{equation}
and $x(t)\in W^1_2$. The transfer function of $\Theta_{1\frac{1}{a}}$ is
$$
W_{\Theta_{1\frac{1}{a}}}(z)=-e^{-i\ell z},
$$
and the corresponding impedance function is found via \eqref{e6-3-6} and is
$$
V_{\Theta_{1\frac{1}{a}}}(z)=i\frac{e^{-i\ell z}+1}{e^{-i\ell z}-1}
$$
that matches $-V^{-1}(z)=\frac{1}{a}M_{(\dA,A_{\frac{\pi}{2}})}(z)$ from \eqref{e-105-ex}. The c-entropy $\calS$ of the L-system $\Theta_{1\frac{1}{a}}$ equals to the one of the L-system $\Theta_{1a}$ since both L-systems share the same main operator, i.e., $\calS=\ell$. To find the coefficient of dissipation we apply \eqref{e-86-dis} independently
$$\calD=\frac{4\frac{1}{a}}{\left(1+\frac{1}{a}\right)^2}=\frac{4\frac{e^{\ell}+1}{e^{\ell}-1}}{(1+\frac{e^{\ell}+1}{e^{\ell}-1})^2}=\frac{e^{2\ell}-1}{e^{2\ell}}=1-e^{-2\ell}.$$
 This confirms our observation made in the end of Section \ref{s6} that is $\calD(1/a)=\calD(a)$.

\begin{remark}\label{r-16}
Note that the symmetric operator $\dA$ of the form \eqref{e-56}, that was used in Examples 1 and 2 for the construction of the representing L-systems, is a model operator (according to the Liv\u sic Theorem  \cite[Sect. IX, Theorem 4]{AG93}, \cite{L}) for any prime symmetric operator with deficiency indices $(1, 1)$ that admits dissipative extension without the spectrum in the open upper half-plane.
\end{remark}


\end{document}